\newtheorem{thm}{Theorem}[section]
\newtheorem{cor}[thm]{Corollary}
\newtheorem{remark}[thm]{Remark}
\newtheorem{definition}[thm]{Definition}
\newtheorem*{Satz*}{Satz}
\newtheorem{Lemma}[thm]{Lemma}
\newtheorem{proposition}[thm]{Proposition}
\newtheorem{Corollary}[thm]{Corollary}
\newcommand{\mathset}[1]{{\left\{#1\right\}}}
\newcommand{\absolute}[1]{\left\lvert#1\right\rvert}
\newcommand{\norm}[1]{\left\|#1\right\|}
\DeclareMathOperator{\PGL}{PGL}
\DeclareMathOperator{\diam}{diam}
\DeclareMathOperator{\GL}{GL}
\DeclareMathOperator{\supp}{supp}
\DeclareMathOperator{\Gal}{Gal}
\DeclareMathOperator{\rig}{rig}
\DeclareMathOperator{\unr}{nr}
\DeclareMathOperator{\tr}{tr}
\DeclareMathOperator{\Ran}{Ran}
\title{Schottky invariant diffusion on the transcendent $p$-adic upper half plane}
\author{Patrick Erik Bradley}
\date{\today}
\begin{document}

\maketitle

\begin{abstract}
The transcendent part of the Drinfel'd $p$-adic upper half plane is shown to be a Polish space. Using Radon measures associated with regular differential $1$-forms invariant under Schottky groups allows to construct self-adjoint  diffusion operators as Laplacian integral operators with kernel functions determined by the $p$-adic absolute value on the complex $p$-adic numbers. Their spectra are explicitly calculated and the corresponding Cauchy problems for their associated heat equations are found to be uniquely solvable and to determine Markov processes having paths which are c\`adl\`g. The heat kernels are shown to have explicitly given distribution functions, as well as    boundary value problems associated with the heat equations under Dirchlet and von Neumann conditions are solved.
\end{abstract}

%%%%%%%%%%%%%%%%%%%%%%%
\section{Introduction}

Diffusion on $p$-adic domains is of increasing interest, both from a theoretical as well as from a practical perspective. On the practical side, the local tree structure of such spaces seems appealing, and ultrametricity is found e.g.\ in spin glasses as well as in energy landscapes in various application domains
in the natural sciences and applied mathematics, in particular where complex networks are relevant \cite{HH2021,ZunigaNetworks,Zuniga2022} and in order to model stochastic processes like $p$-adic Brownian motion \cite{RW2023,Zuniga2015,Weisbart2021,Weisbart2024,PRSWY2024}.
In this context, a strong motor for developing ultrametric analysis is given by theoretical physics and its relationship with number theory
\cite{Varadarajan1997,Varadarajan2002,HMSS2018,Marcolli2020,MM2020}.
\newline

On the other hand, number-theoretic questions are an invitation to study problems over local fields
and other non-archimedean domains, where analytic tools for stochastic processes lead to operators acting on complex-valued functions. The work by S.\ Haran on their relationship with the Riemann zeta function is an example of such an arithmetic consideration, cf.\ 
\cite{Haran1990}.
The arithmetic significance of $p$-adic analysis is beginning to play out itself further through the study of Markov processes invariant under hyperbolic discontinuous groups, 
as effected e.g.\ in \cite{Yasuda2004} on a space named 
\emph{$p$-adic half-plane}.
This space is different from
Drindfeld's $p$-adic upper half plane
\cite{DT2007} which allows a $p$-adic uniformisation of Shimura curves, cf.\ also \cite{BC1991,Kuennemann2000}.
This reveals them as disjoint unions of Mumford curves, and diffusion on the points of such curves
defined over a $p$-adic number field
has been initiated the author in \cite{brad_heatMumf,brad_thetaDiffusionTateCurve,Brad_HearingGenusMumf,
BL_shapes_p}.
\newline

One unsettled question is how to carry over the ultrametric analytic methods developped over local fields to the field $\mathds{C}_p$ of $p$-adic complex numbers, which is not locally compact and thus has no Haar measure available for such a task, cf.\ \cite[Ch.\ 6.8]{Gouvea2020}. One aim of this present article is to show that at least on the transcendent part $\Omega_{\tr}$ of $\mathds{C}_p$, called here the \emph{transcendent $p$-adic upper half plane}, this is possible due to the work \cite{APZ1998}, where it is shown that this transcendent part is locally profinite, where these local pieces are given by orbits of transcendent $p$-adic numbers under the continuous action of the $p$-adic absolute Galois group.
Here, it will be shown that 
$\Omega_{\tr}$ is a Polish space and allows for Radon measures with which ultrametric Diffusion operators acting on function spaces on the transcendent $p$-adic upper half plane can be constructed, modelled after the $p$-adic Vladimirov and Taibleson operators \cite{Taibleson1975,VVZ1994,RZ2008} and the Z\`u\~niga operators from \cite{ZunigaNetworks} in a fashion which is invariant under the Schottky groups forming the $p$-adic uniformisation of a Shimura curve. This approach generalises \cite{brad_SchottkyDiffusion}, where the operators were defined on the $p$-adic points of a Mumford curve. 
\newline

Unlike in the case of Z\`u\~niga operators, which have eigenvalues with infinite multiplicity, cf.\ \cite[Ex.\ 1]{brad_heatMumf} and \cite[Prop.\ 4]{BL-topoIndex_p},
the present operators generate a  semigroup having a heat kernel distribution function which furthermore can be made explicit in terms of the eigenfunctions. In $p$-adic  diffusion on finite topologies, the heat kernel was exhibited as an explicitly defined distribution, cf.\ \cite[Cor.\ 3.7]{BL-topoIndex_p}. But the method of this article also applies to that case, which leads to another class of $p$-adic Laplacian operators generating a diffusion having a heat kernel distribution function.
In the end, boundary value problems for the heat equation under invariant Dirichlet or invariant von Neumann conditions are solved explicitly.
\newline

The main results are given concretely as follows:
\newline

\noindent
{\bf Theorem \ref{countable}.}
\emph{The transcendent $p$-adic upper half plane $\Omega_{\tr}$ is locally profinite, and the set of orbits of transcendental elements in $\mathds{C}_p$ under the action of the absolute $p$-adic Galois group is countably infinite.}
\newline

A consequence is that the transcendent $p$-adic upper half plane is a Polish space (Corollary \ref{Polish}).
\newline

Theorem \ref{MumfordEV} states that the $L^2$-space of functions on $\Omega_{\tr}$ outside a measure zero part determined by a given regular differential form $\omega$ invariant under the Schottky group has an orthonormal basis consisting of invariant ultrametric wavelets which are eigenfunctions of the diffusion operator $\Delta_\alpha^{\frac12}$ constructed in this article. The corresponding eigenvalues can be calculated explicitly, have finite multiplicity and an invariance property under the Schottky group.
\newline 

Theorem \ref{ShimuraEigenbasis} states that the $L^2$-space associated with a $p$-adically uniformised Shimura curve has an orthonormal eigenbasis for a certain self-adjoint and negative definte Laplacian integral operator $\mathcal{L}$ constructed from a collection of data coming from the constitutent Mumford curves. Again, the eigenvalue multiplicities are finite.
\newline

Theorem \ref{heatKernelDistribution}
states that the semigroup associated with the operator $\mathcal{L}$ yields a unique solution to the Cauchy problem for the heat equation associated with $\mathcal{L}$ and induces an invariant Markov process 
on the underlying ultrametric space outside the measure zero part determined by the given Schottky invariant regular differential forms.
Its paths are c\`adl\`ag.
\newline

\noindent
{\bf Corollary \ref{heatKernel}.} \emph{The heat equation associated with $\mathcal{L}$ has a well-defined heat kernel function $p(t,x,y)$ for $t\ge0$ and $x\neq y$. For $x=y$ it is only defined for $t>>0$.}
\newline

\noindent
{\bf Theorem \ref{bvp_Shimura}.}
\emph{The boundary value problem
for the heat equation associated with $\mathcal{L}$ having as initial condition an invariant test function and either a Dirichlet or a von Neumann boundary condition on a compact open orbit $\Gamma S$ has a solution for $t\ge0$ if and only if the initial function is supported in $\Gamma S$.}
\newline

The article is structured into three further sections. Section 2 gives a brief review of $p$-adic uniformisation and states the needed results about the Drinfel'd upper half plane, Mumford curves and the uniformisation of Shimura curves. Section 3 studies the transcendent $p$-adic upper half plane, provides with appropriate Radon measures and constructs new integral operators in order to study their spectral behaviour. Section 4 is devoted to the study of the heat equation on the transcendent part of $p$-adically uniformised Shimura curves using these new invariant operators and includes boundary value problems.

%%%%%%%%%%%%%%%
\section{A brief review of $p$-adic uniformisation}

Here, the well-known $p$-adic uniformisation of Shimura curves is briefly explained by introducing the Drinfel'd $p$-adic upper half plane, $p$-adic uniformisation of
Mumford curves and how this concept extends to Shimura curves.
%A short introduction to Mumford curves can be found in \cite{Schmechta2000}.

%%%%%%%%%%%%%
\subsection{The Drinfel'd $p$-adic upper half plane}

The $p$-adic counterpart $\mathds{C}_p$ of the complex numbers is the smallest algebraically closed and complete extension field of the $p$-adic number field $\mathds{Q}_p$. It turns out that this can be obtained by completion of the algebraic closure $\bar{\mathds{Q}}_p$ of $\mathds{Q}_p$, and that this is of infinite dimension, when viewed as a vector space over $\mathds{Q}_p$, and is not locally compact, cf.\  \cite[Ch.\ 6.8]{Gouvea2020}.
\newline

The $p$-adic upper half plane over the field of $p$-adic numbers  is defined as
\[
\Omega_p=\mathds{P}^1(\mathds{C}_p)\setminus\mathds{P}^1(\mathds{Q}_p)\,
\]
where $\mathds{P}^1$ denotes the projective line. The space $\Omega_p$
 consists of the complex $p$-adic points of the rigid-analytic upper half plane studied by Drinfel'd, cf.\ \cite{Drinfeld1974,DT2007}.
Important for the remainder of the article is that $\PGL_2(\mathds{Q}_p)$ acts on $\Omega_p$ by M\"obius transformations. Also, the $p$-adic absolute Galois group $G=\Gal(\bar{\mathds{Q}}_p/\mathds{Q}_p)$ acts on $\Omega_p$ as continuous isometries.

%%%%%%%%%
\subsection{Mumford curves}

A short introduction to Mumford curves can be found in 
\cite{Schmechta2000}. More detailed treatments are \cite{GvP1980} and
\cite{FP2004}. Here, a few facts are collected, some of which will be used in the remainder of this article. 
\newline

A $p$-adic Mumford curve is a projective algebraic curve $X$ defined over $\mathds{Q}_p$ such that it is the quotient
\[
X=\Gamma\setminus\Omega
\]
under the action of a finitely generated free discrete subgroup $\Gamma$ of $\PGL_2(\mathds{Q}_p)$.
The group $\Gamma$ is called \emph{Schottky group}, and its non-trivial elements are  hyperbolic M\"obius transformations. The action takes place on an open subspace $\Omega$ of the $p$-adic projective line $\mathds{P}^1$ outside the set of limit points of $\Gamma$, and is discontinuous. The number of generators of $\Gamma$ coincides with the genus of $X$.
\newline

The important theorem is:
\begin{thm}[Mumford]
The conjugacy classes of Schottky groups within $\PGL_2(\mathds{Q}_p)$ are in one-to-one correspondence with the isomorphism classes of $p$-adic Mumford curves.
\end{thm}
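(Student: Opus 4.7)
The plan is to establish the correspondence in both directions and then reduce isomorphism to conjugation.

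\medskip

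\textbf{Schottky group to Mumford curve.} Given a finitely generated discrete free subgroup $\Gamma\subset\PGL_2(\mathds{Q}_p)$ whose non-trivial elements are hyperbolic, I would first study its limit set $\Lambda(\Gamma)\subset\mathds{P}^1(\mathds{Q}_p)$, defined as the closure of the collection of attracting/repelling fixed points of non-trivial elements of $\Gamma$; discreteness together with the ping-pong lemma for Schottky generators shows that $\Lambda(\Gamma)$ is compact, totally disconnected and nowhere dense. Setting $\Omega_\Gamma:=\mathds{P}^1\setminus\Lambda(\Gamma)$, this is a $\Gamma$-stable admissible open subspace of $\mathds{P}^{1,\mathrm{an}}$ on which $\Gamma$ acts freely and properly discontinuously. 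Choosing $g$ pairs of Ford disks as a Schottky fundamental domain, I would build a $\Gamma$-equivariant semistable formal $\mathds{Z}_p$-model of $\Omega_\Gamma$ whose special fibre is a configuration of projective lines meeting at ordinary double points, with incidence graph the Cayley graph of $\Gamma$. Passing to the $\Gamma$-quotient gives a proper formal scheme over $\mathds{Z}_p$; rigid GAGA then algebraises the rigid quotient $\Gamma\setminus\Omega_\Gamma$ to a smooth projective curve over $\mathds{Q}_p$, of genus equal to the first Betti number of the (finite) quotient graph, i.e.\ to the rank of $\Gamma$.

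\medskip

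\textbf{Mumford curve to Schottky group.} Conversely, given $X/\mathds{Q}_p$ a smooth projective curve whose stable reduction is a union of projective lines meeting transversally at ordinary double points, the strategy is to construct the rigid-analytic universal covering $\widetilde{X}^{\mathrm{an}}\to X^{\mathrm{an}}$ and exhibit it as an admissible open of $\mathds{P}^{1,\mathrm{an}}$. To do so I would analyse the fibres of the reduction map: above a smooth point of the special fibre they are open discs, above a double point they are open annuli, and gluing along the universal cover of the reduction graph $G(X)$ (a tree, since $G(X)$ has first Betti number equal to the genus) assembles these opens into a simply connected rigid analytic curve. A rigid analogue of the classification of simply connected subdomains of $\mathds{P}^1$, combined with $p$-adic Riemann-Roch applied to compact rigid neighbourhoods, then identifies $\widetilde{X}^{\mathrm{an}}$ with $\mathds{P}^1\setminus\Lambda$ for a compact totally disconnected $\Lambda$. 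The deck group acts by rigid automorphisms of $\mathds{P}^1\setminus\Lambda$, hence extends uniquely to a subgroup $\Gamma\subset\PGL_2(\mathds{C}_p)$, free of rank equal to the genus; Galois descent using the $\mathds{Q}_p$-structure of $X$ places $\Gamma$ inside $\PGL_2(\mathds{Q}_p)$, and hyperbolicity of non-trivial elements follows from the fact that their fixed points lie in $\Lambda\subset\mathds{P}^1(\mathds{Q}_p)$.

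\medskip

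\textbf{Well-definedness up to conjugation.} Two conjugate Schottky groups visibly give isomorphic quotients, since a conjugating M\"obius transformation maps $\Omega_{\Gamma_1}$ to $\Omega_{\Gamma_2}$ equivariantly. Conversely, an isomorphism $\Gamma_1\setminus\Omega_{\Gamma_1}\simeq\Gamma_2\setminus\Omega_{\Gamma_2}$ lifts to a rigid isomorphism of universal covers $\Omega_{\Gamma_1}\simeq\Omega_{\Gamma_2}$; by the identification above, this extends to an automorphism of $\mathds{P}^{1,\mathrm{an}}$, hence to an element of $\PGL_2(\mathds{Q}_p)$, and equivariance translates into the conjugation relation between the $\Gamma_i$.

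\medskip

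The step I expect to be the main obstacle is the construction of the universal cover in the Mumford-to-Schottky direction and its embedding into $\mathds{P}^1$: the local reduction-theoretic description of fibres must be glued across a tree of components while preserving rigid analytic structure, and the identification of the resulting simply connected domain with an open of $\mathds{P}^1$ is genuinely non-trivial. This is where Mumford's original formal-scheme construction, refined by the Gerritzen-van der Put theory of Schottky uniformisation, carries the real weight of the proof.
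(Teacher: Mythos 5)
The paper offers no proof of its own here---it simply cites Mumford's original article and the $p$-adic analytic treatment in Gerritzen--van der Put---and your outline is a faithful sketch of exactly that analytic proof: domain of discontinuity, fundamental domain and formal semistable model in one direction; universal cover of the reduction, identification with $\mathds{P}^1\setminus\Lambda$, and the extension theorem for automorphisms in the other. One remark: with the paper's definition of a Mumford curve \emph{as} a quotient $\Gamma\setminus\Omega$, surjectivity is tautological, so the entire content of the theorem is your third step, and its key ingredient---that a rigid isomorphism $\Omega_{\Gamma_1}\to\Omega_{\Gamma_2}$ extends to an element of $\PGL_2(\mathds{Q}_p)$---is precisely the nontrivial extension theorem you correctly flag but do not prove; that, together with the embedding of the universal cover into $\mathds{P}^1$, is where the cited sources do the real work.
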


\begin{proof}
Cf.\ \cite{Mumford1972}. A $p$-adic analytic proof is contained in \cite{GvP1980}.
\end{proof}

After a finite field extension, a Mumford curve can be related to a so-called \emph{stable} graph, which is a connected finite graph having no vertex of degree at most $2$, and  whose first Betti number equals to the genus of $X$. It is the quotient under the action of $\Gamma$ on a certain subtree of the $p$-adic Bruhat-Tits tree $\mathscr{T}_p$, and is called the \emph{reduction graph} of $X$. Cf.\ \cite{Serre1980} for an introduction to the $p$-adic Bruhat-Tits tree.

%%%%%%%%%%%%%%%
\subsection{$p$-adic uniformisation of Shimura curves}

Let $\Delta$ be a quaternion skew field having centre $\mathds{Q}$, and let $\delta\in\mathds{N}$ be the product of all prime numbers where $\Delta$ is ramified. Let $S_N$ be the generic fibre of the Shimura curve which represents the moduli functor for certain abelian surfaces over $\mathds{Z}_p$-schemes. The details of this functor can be found e.g.\ in \cite[Thm.\ 1]{Kuennemann2000}, but are not important for this article. What is important, however, is that it is a projective flat $\mathds{Z}_p$-scheme, and its generic fibre is the Shimura curve defined over $\mathds{Q}_p$ we are interested in.
For more details on Shimura curves and the following \v{C}erednik-Drinfel'd theorem, cf.\ e.g.\ \cite{Kuennemann2000} (a short exposition), or \cite{BC1991} (a longer exposition).

\begin{thm}[\v{C}erednik-Drinfel'd]
There is a canonical isomorphism of rigid analytic spaces over $\mathds{Q}_p$:
\[
S_N^{\rig}=\GL_2(\mathds{Q}_p)\setminus \left[\Omega_K\otimes_{\mathds{Q}_p}\mathds{Q}_p^{\unr}\times Z_N\right]
\]
for $p\mid\delta$.
\end{thm}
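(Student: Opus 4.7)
The plan is to follow the classical strategy of \v{C}erednik and Drinfel'd, which splits the proof into a \emph{local} analytic uniformisation and a \emph{global} switch of invariants. First I would exploit the moduli interpretation of $S_N$: its points parametrise abelian surfaces equipped with an action of a maximal order $\mathcal{O}_\Delta\subset\Delta$ together with a level-$N$ structure. For $p\mid\delta$ the completion $\Delta\otimes_{\mathds{Q}}\mathds{Q}_p$ is the quaternion division algebra $D$ over $\mathds{Q}_p$, so the associated $p$-divisible groups carry a compatible action of a maximal order $\mathcal{O}_D\subset D$. This reduces the local study of $S_N$ at $p$ to the moduli of special formal $\mathcal{O}_D$-modules of height $4$ over $\mathds{Z}_p^{\unr}$.

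The crux is then Drinfel'd's representability theorem for special formal modules: the deformation functor of such an object over $\mathds{Z}_p^{\unr}$-algebras is pro-representable by a formal scheme whose rigid analytic generic fibre is canonically $\Omega_K\otimes_{\mathds{Q}_p}\mathds{Q}_p^{\unr}$. The group $\GL_2(\mathds{Q}_p)$ acts through the isomorphism $D^\times/\mathds{Q}_p^\times\cong\PGL_2(\mathds{Q}_p)$ obtained from an embedding $D\hookrightarrow M_2(\mathds{Q}_p^{\unr})$, twisted by Frobenius. I expect this to be the principal obstacle: establishing the equivariant identification, and tracing how Frobenius acts through an unramified character, requires substantial work with formal $\mathcal{O}_D$-modules of dimension $2$, and I would simply cite Drinfel'd's original paper at this point rather than reconstruct the argument.

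Finally I would invoke \v{C}erednik's switch of invariants. Let $\Delta'$ denote the quaternion $\mathds{Q}$-algebra obtained from $\Delta$ by interchanging invariants at $p$ and at $\infty$; since $\Delta$ is indefinite and ramified at $p$, $\Delta'$ is definite and split at $p$, so $(\Delta')^\times(\mathds{R})$ is compact and strong approximation applies to $(\Delta')^\times$ away from $\infty$. Decomposing the adelic double coset space attached to the level-$N$ open compact subgroup then yields a discrete set $Z_N$ carrying an action of $\GL_2(\mathds{Q}_p)\cong(\Delta'\otimes\mathds{Q}_p)^\times$ together with an arithmetic subgroup $\Gamma\subset\GL_2(\mathds{Q}_p)$. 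Patching the local uniformisation of the previous paragraph to this global adelic description, the rigid analytic moduli becomes the quotient of $\Omega_K\otimes_{\mathds{Q}_p}\mathds{Q}_p^{\unr}\times Z_N$ by the diagonal $\GL_2(\mathds{Q}_p)$-action, producing the stated isomorphism. Since this result is classical, the paper cites it rather than proves it; complete arguments can be found in \cite{BC1991,Kuennemann2000}.
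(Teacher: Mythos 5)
The paper does not prove this theorem: it is stated as a classical result and the reader is referred to the expositions of K\"unnemann and Boutot--Carayol. Your outline reproduces the standard two-step strategy from those sources --- Drinfel'd's uniformisation of the deformation space of special formal $\mathcal{O}_D$-modules of height $4$, followed by \v{C}erednik's interchange of invariants at $p$ and $\infty$ and strong approximation for the resulting definite algebra $\Delta'$ --- and your decision to cite Drinfel'd for the representability theorem is exactly the level of detail the paper itself adopts. So in structure your proposal agrees with the paper's (non-)proof.

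There is, however, one genuinely incorrect step in your sketch: there is no isomorphism $D^\times/\mathds{Q}_p^\times\cong\PGL_2(\mathds{Q}_p)$. For $D=\Delta\otimes_{\mathds{Q}}\mathds{Q}_p$ the quaternion division algebra, $D^\times/\mathds{Q}_p^\times$ is compact, whereas $\PGL_2(\mathds{Q}_p)$ is not, so the $\GL_2(\mathds{Q}_p)$-action on the deformation space cannot be obtained from $D^\times$ in the way you describe. The group that acts on $\widehat{\Omega}\,\widehat{\otimes}\,\mathds{Z}_p^{\unr}$ is the group of $\mathcal{O}_D$-linear quasi-isogenies of the fixed special formal module $\mathds{X}$ over $\overline{\mathds{F}}_p$, i.e.\ the unit group of $\End_{\mathcal{O}_D}(\mathds{X})\otimes\mathds{Q}\cong M_2(\mathds{Q}_p)$ --- equivalently $(\Delta'\otimes\mathds{Q}_p)^\times$ after the switch of invariants, as you correctly write in your final paragraph; the two statements in your text are inconsistent with each other. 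The algebra $D^\times$ itself only enters through the Drinfel'd covering tower of $\Omega$, which is not needed for the statement at hand. The Frobenius twist you allude to is the action of $g\in\GL_2(\mathds{Q}_p)$ on $\mathds{Q}_p^{\unr}$ by $\mathrm{Frob}^{v_p(\det g)}$ (which is why the quotient descends to $\mathds{Q}_p$). With that correction, your sketch matches the argument in the cited references.
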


Here, the $p$-adic upper half plane
\[
\Omega_K=\mathds{P}^1(\mathds{C}_p)\setminus\mathds{P}^1(K)
\]
over some $p$-adic number field $K$ unramified over $\mathds{Q}_p$ is used, and $Z_N$ is a certain space derived from $\Delta$ contributing  to only finitely many orbits of a pair $(w,z)$ with fixed $w\in\Omega_K\otimes_{\mathds{Q}_p}\mathds{Q}_p^{\unr}$  under the action of $\GL_2(\mathds{Q}_p)$, and it thus happens to be that
\[
S_N^{\rig}=\bigsqcup\limits_{i=1}^N\Gamma_i'\setminus\Omega_K
\]
for some Schottky subgroups  $\Gamma_i'\subset\PGL_2(\mathds{Q}_p)$ with $i=1,\dots,N$---this is immediate from  \cite[sentence containing (3.7)]{Kuennemann2000}.
Hence, $S_N^{\rig}$ is a finite disjoint union of Mumford curves.
\newline

The transcendent points of the Shimura curve $S_N^{\rig}$ are consequently described as
\begin{align}\label{GivenShimura}
S^{\rig}_{N,\tr}=S_N^{\rig}(\mathds{C}_p)\setminus S_N^{\rig}(\bar{\mathds{Q}}_p)
=\bigsqcup\limits_{i=1}^N\Gamma_i'\setminus\left(\Omega(\mathds{C}_p)\setminus\Omega(\bar{\mathds{Q}}_p)\right)
=\bigsqcup\limits_{i=1}^N \Gamma_i'\setminus\Omega_{\tr},
\end{align}
where
\[
\Omega_{\tr}=\Omega(\mathds{C}_p)\setminus\Omega(\bar{\mathds{Q}}_p)
\]
is the transcendent $p$-adic upper half plane.
\newline

In the following, the constituent Mumford curves are assumed to be  each given as $\Gamma_i'\setminus \Omega_p$ for $i=1,\dots,N$.

%%%%%%%%%%%%%%%%55
\section{The transcendent $p$-adic upper half plane}

Since the field of $p$-adic complex numbers $\mathds{C}_p$ is not locally compact, there is no Haar measure to work with. However, it has been shown by Alexandru, Zaharescu and Popescu that that the transcendent part $\Omega_{\tr}$ of $\mathds{C}_p$ is locally profinite, cf.\ \cite{APZ1998}. It turns out that $\Omega_{\tr}$ is an irregular ultrametric analogue of  $\mathds{Q}_p$ in the sense that it is Polish, since it is covered by countably many profinite sets, and thus allows for Radon measures in such a way that Schottky invariant linear diffusion operators can be constructed on function spaces over $\Omega_{\tr}$.

%%%%%%%%%%%%%%5
\subsection{$\Omega_{\tr}$ is locally profinite}

From \cite[1.\S4 and 2.\S1]{APZ1998}, it can be easily seen that the following is an equivalence relation on the set $\mathscr{D}$ of all distinguished sequences in $\mathds{C}_p$, namely:
\[
(\alpha_n)\sim(\beta_n):\Leftrightarrow
(\alpha_n-\beta_n)\in\mathscr{D}\;\text{and}\;v(\alpha_n-\beta_n)\to\infty,
\]
where $v$ is the valuation of $\mathds{C}_p$. Define
for $\rho\in\mathds{Q}$ the set
$\mathscr{D}_\rho\subset\mathscr{D}/\!\sim$ as the set of equivalence classes of distinguished sequences $(\alpha_n)$ with $v(\alpha_n)=\rho$ for $n>>0$.

\begin{thm}\label{countable}
The transcendent $p$-adic upper half plane $\Omega_{\tr}$ is locally pro-finite, and
the set of orbits of transcendental elements in $\mathds{C}_p$ under the action of the absolute $p$-adic Galois group  is countably infinite.
\end{thm}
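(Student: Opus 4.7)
The plan is to combine the distinguished-sequence parametrization of \cite{APZ1998} with a rational stratification argument to separately establish local profiniteness and countability.

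First, for local profiniteness, I would argue that every Galois orbit $Gx$ with $x\in\Omega_{\tr}$ is profinite. Since $G=\Gal(\bar{\mathds{Q}}_p/\mathds{Q}_p)$ is profinite and acts on $\mathds{C}_p$ by continuous isometries, the stabiliser $\Stab_G(x)$ is a closed subgroup, and the continuous bijection $G/\Stab_G(x)\to Gx$ is a homeomorphism from a compact Hausdorff space onto a Hausdorff one. Because $\Omega_{\tr}$ is partitioned by such orbits, and the orbits are open in the topology of $\Omega_{\tr}$ used in \cite{APZ1998}, every point of $\Omega_{\tr}$ has a profinite open neighbourhood, proving local profiniteness.

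Second, I would identify the orbit set $\Omega_{\tr}/G$ with $\mathscr{D}/\!\sim$. The proposed map sends an equivalence class $[(\alpha_n)]$ to the orbit of $\lim_n \alpha_n$, and by \cite[1.\S4 and 2.\S1]{APZ1998} this is a well-defined bijection: every transcendental element of $\mathds{C}_p$ arises as such a limit, and two distinguished sequences have $G$-equivalent limits if and only if they are $\sim$-equivalent. Under this bijection, the $\rho$-stratification on $\mathscr{D}/\!\sim$ corresponds to the eventual valuation of any orbit representative.

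Third, to show $\mathscr{D}/\!\sim$ is countably infinite, I would use the decomposition
\[
\mathscr{D}/\!\sim = \bigsqcup_{\rho\in\mathds{Q}} \mathscr{D}_\rho.
\]
Since $\mathds{Q}$ is countable, it suffices to prove that each $\mathscr{D}_\rho$ is countable and that infinitely many of them are non-empty. Non-emptiness of infinitely many strata follows by rescaling a fixed distinguished sequence by uniformisers of varying valuation. Countability of an individual $\mathscr{D}_\rho$ would be obtained by encoding each equivalence class through combinatorial tail data taking values in the residue field $\bar{\mathds{F}}_p$, which is countable, and using that $\sim$ precisely identifies sequences whose tails eventually agree.

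The main obstacle is the last step: making the sketched residue-encoding of $\mathscr{D}_\rho$ rigorous requires a careful reading of the distinguished-sequence machinery of \cite{APZ1998} in order to verify that the equivalence $\sim$ collapses the a priori uncountable set of sequences within each valuation stratum to a countable quotient. The first two steps are essentially an application of general profinite-group theory and a citation of \cite{APZ1998}; it is the quantitative countability claim that will require the most work.
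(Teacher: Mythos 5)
Your treatment of local profiniteness is essentially sound and is packaged a bit differently from the paper's: the paper simply invokes \cite[Thm.\ 3.5]{APZ1998}, whereas you derive compactness and total disconnectedness of each orbit from the homeomorphism $G/\Stab_G(x)\to Gx$ of a profinite quotient onto the orbit. Note, however, that the openness of the orbits in $\Omega_{\tr}$, which your argument also requires, is exactly the nontrivial content of the cited theorem (it is what the locally finite rooted tree description of the unit circle's transcendental part encodes), so this step is a reorganisation rather than an independent proof.

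The countability argument, by contrast, has a genuine gap, and the mechanism you sketch for closing it would fail. You reduce to showing that each stratum $\mathscr{D}_\rho$ is countable by encoding classes through tail data in $\bar{\mathds{F}}_p$ and ``using that $\sim$ precisely identifies sequences whose tails eventually agree.'' But a set of sequences over a countable (even two-letter) alphabet, modulo eventual agreement of tails, is uncountable whenever the underlying set of sequences is: each class of that relation is countable, so an uncountable family of sequences necessarily has uncountably many classes. Hence either your identification of $\Omega_{\tr}/G$ with $\mathscr{D}/\!\sim$ or your reading of $\sim$ must be incorrect --- taken together as stated they would contradict the theorem rather than prove it. The missing ingredient is a compactness/finiteness step, which is what the paper's proof actually supplies: the transcendental part $S_\rho^{\tr}$ of the circle $\absolute{x}=p^\rho$ is the boundary of a \emph{locally finite} rooted tree (local finiteness because $\mathds{Q}_p$ has only finitely many extensions of each bounded degree, so the residue data at each level range over a finite field), and each Galois orbit is the \emph{entire} boundary of the branch below a single first-level vertex $[\alpha_0]$. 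Local finiteness at the root therefore yields \emph{finitely} many orbits per circle --- not merely countably many --- and the countability of the radius set $p^{\mathds{Q}}$ then gives a countable union of finite sets. Without an argument of this type, your per-stratum count does not close.
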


\begin{proof}
The locally pro-finite property is a consequence of \cite[Thm.\ 3.5]{APZ1998}.
%Let 
%$\mathscr{D}_0$ be the set of distinghuished sequences $(\alpha_n)$ with $v(\alpha_n)=0$ 
%modulo zero sequences, endowed with the common prefix distance, also known as the Baire metric.
An equivalence class $[(\alpha_n)]\in\mathscr{D}_0$ can be represented by a sequence of elements in the residue field of a $p$-adic number field.
Since $\mathds{Q}_p$  has only finitely many extension fields of degree $\le n$, it follows that a given  a prefix 
\[
[\alpha_0],\dots,[\alpha_n]
\]
of some $[(\alpha_n)]\in\mathscr{D}_0$ has only finitely many possibilities of continuation in the next place $n+1$,
because any possible class of $[\alpha_{n+1}]$ corresponds to  a unique element of some residue field at dimension $\dim(\alpha_{n+1})$, and this is a finite field.
This shows that the set $S_0^{\tr}$ of transcendent elements of the unit circle in $\mathds{C}_p$ is the boundary of a locally finite rooted tree, and also that the Galois orbit $\mathcal{C}(t)$ for $t\in S_0^{\tr}$ is the boundary of a branch starting in some class $[\alpha_0]$. 
The topology induced by this tree coincides with the induced topology from $\mathds{C}_p$. In fact this is the content of the proof of \cite[Thm.\ 3.5]{APZ1998}. 
Hence, $S_0^{\tr}$ is a compact locally pro-finite space, i.e.\ a finite union of Galois orbits. Via rescaling, this property also  belongs to the transcendent points  $S_\rho^{\tr}$ of the complex $p$-adic circle of radius $p^\rho\in p^{\mathds{Q}}$, only that the number of disjoint Galois orbits covering $S_\rho^{\tr}$ might be larger, because the branches start in a higher-dimensional extension field of $\mathds{Q}_p$. But it  still is finite.
The countability of the set $p^{\mathds{Q}}$ of possible radii of circles now proves the assertion.
\end{proof}

\begin{Corollary}\label{Polish}
The space $\Omega_{\tr}$ is a locally compact Polish subspace of $\mathds{C}_p$.
\end{Corollary}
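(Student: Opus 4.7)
The plan is to verify the three defining properties of a locally compact Polish space separately, with Theorem \ref{countable} as the only non-trivial input. Local compactness is immediate: Theorem \ref{countable} asserts that every point of $\Omega_{\tr}$ admits a profinite neighbourhood in the subspace topology inherited from $\mathds{C}_p$, and profinite sets are compact.

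Second countability I would deduce from the decomposition $\Omega_{\tr}=\bigcup_{\rho\in\mathds{Q}}S_\rho^{\tr}$ established in the proof of Theorem \ref{countable}: each $S_\rho^{\tr}$ is a finite union of compact Galois orbits, each carrying the restriction of the $p$-adic metric from $\mathds{C}_p$. Compact metric spaces are separable, a countable union of separable metric spaces is separable, and a separable metric space is second countable.

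To produce a complete compatible metric I would invoke the classical fact that any locally compact Hausdorff second countable space is Polish. Concretely, one realises $\Omega_{\tr}$ as an open subset of its one-point compactification $\Omega_{\tr}^{+}$; the latter is compact Hausdorff and second countable, hence metrisable by Urysohn's theorem, and a compact metric space is automatically Polish. As an open---and therefore $G_\delta$---subset of a Polish space, $\Omega_{\tr}$ acquires a complete compatible metric via Alexandrov's theorem.

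The substantive work was absorbed in Theorem \ref{countable}; this corollary is essentially a repackaging of its conclusions in the language of descriptive set theory, and the only step involving any general-topology machinery is the implication ``locally compact Hausdorff $+$ second countable $\Rightarrow$ Polish'' outlined above. One should also briefly verify the identification of $\Omega_{\tr}=\Omega(\mathds{C}_p)\setminus\Omega(\bar{\mathds{Q}}_p)$ with the union of the transcendent circles $S_\rho^{\tr}$ appearing in the proof of Theorem \ref{countable}; this follows immediately from $\mathds{P}^1(\mathds{Q}_p)\subset\mathds{P}^1(\bar{\mathds{Q}}_p)$, so that no transcendent element of $\mathds{C}_p$ is removed when passing to the Drinfel'd upper half plane.
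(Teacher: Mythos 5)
Your proposal is correct and follows essentially the same route as the paper: both arguments rest entirely on local profiniteness for local compactness and on Theorem \ref{countable} for the countable cover by compact Galois orbits, with the remainder being standard general topology. The only cosmetic difference is that you re-derive metrisability via the one-point compactification and Urysohn, whereas the paper simply observes that $\Omega_{\tr}$ is already a metric subspace of $\mathds{C}_p$ and is countable at infinity.
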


\begin{proof}
The subspace $\Omega_{\tr}$ of $\mathds{C}_p$ is locally compact, since it is locally pro-finite according to
\cite[Thm.\ 3.5]{APZ1998}.
Furthermore, it is clearly metrisable and, by Theorem \ref{countable}, countable at infinity. This means that it is also Polish.
\end{proof}

\begin{remark}
The transcendent $p$-adic upper half plane $\Omega_{\tr}$ 
can be viewed as an ultrametric manifold having a countable covering by ultrametric discs given by the Galois orbits $\mathcal{C}(t)$.
\end{remark}

%%%%%%%%%%%%%%%%%%%%
\subsection{Measures on $\Omega_{\tr}$}
\label{measures}

The Haran correspondence is a bijection between  Markov chains on a rooted tree $T$ and probability measures on $\partial T$ \cite[Ch.\ 2.1.1]{Haran2008}.
By multiplying with a fixed number $r>0$, a Markov chains corresponds now to a Radon measure $\nu$ on $\partial T$ with $\nu(\partial T)=r$.
In this manner, take now
on each $\mathcal{C}(t)$ a Radon measure in a way such that the measure of each vertex equals the sum of the child vertex measures.
A constructive  proof of the existence of  such a so-called \emph{equity measure}  can be found in \cite[\S 2.2]{LocalUltrametricity}, and this is an example of such a Radon measure.
\newline

Define now a measure $\nu$ on $\Omega_{\tr}$ by a collection of equity measures on each $\mathcal{C}(t)$.
A possible realisation is to say that 
\begin{align}\label{localProbability}
\nu(\mathcal{C}(t))=1
\end{align}
for all $t\in\Omega_{\tr}$, i.e.\ to realise such a measure as probability measure. 

\begin{definition}
A measure $\nu$ as above is called a \emph{local equity measure} on the transcendent $p$-adic upper half plane $\Omega_{\tr}$. In case (\ref{localProbability}) holds true, $\nu$ is called a \emph{local equity probability measure}.
\end{definition}

Let $\Gamma\subset\PGL_2(\mathds{C}_p)$ be a Schottky group, and $F\subset\Omega_K$ a good fundemantal domain for the action of $\Gamma$ on its domain of regularity being $\Omega_K$. Then define
\[
F_{\tr}=F\cap\Omega_{\tr}
\]
which is a compact subset of $\Omega_{\tr}$.

\begin{Lemma}
Assume that $F$ is defined over $\mathds{Q}_p$, i.e.\ the centres of the discs defining $F$ can be chosen in $\mathds{Q}_p$. Then $F$ is invariant under the $p$-adic absolute Galois group.
\end{Lemma}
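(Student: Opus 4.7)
The plan is to unwind the structure of a good fundamental domain and then apply the elementary fact that elements of $G = \Gal(\bar{\mathds{Q}}_p/\mathds{Q}_p)$ act on $\mathds{C}_p$ as isometries fixing $\mathds{Q}_p$ pointwise. Recall that a good fundamental domain $F$ for a Schottky group $\Gamma$ of rank $g$ on its domain of regularity is (up to standard conventions) the complement in $\mathds{P}^1(\mathds{C}_p)$ of $2g$ open discs $B_1,\dots,B_{2g}$, paired by the generators of $\Gamma$. Each $B_i$ has the form $B_i = \{x \in \mathds{C}_p : |x - c_i| < r_i\}$ (or its image under swapping $0$ and $\infty$, which can be handled symmetrically), where $c_i$ is the centre and $r_i \in p^{\mathds{Q}}$.

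First I would use the hypothesis: by assumption the centres $c_i$ can be chosen in $\mathds{Q}_p$, and the radii $r_i$ lie in the value group $p^{\mathds{Q}}$, which is intrinsically attached to the absolute value and so is Galois-invariant. Next I would fix any $\sigma \in G$ and recall two standard facts about $\sigma$: it restricts to the identity on $\mathds{Q}_p$, and it extends continuously to $\mathds{C}_p$ as an isometry, i.e.\ $|\sigma(x)| = |x|$ for all $x \in \mathds{C}_p$ (this is a consequence of the uniqueness of the extension of the $p$-adic absolute value from $\mathds{Q}_p$ to its algebraic closure, together with the density of $\bar{\mathds{Q}}_p$ in $\mathds{C}_p$).

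Combining these ingredients, for each disc $B_i$ and each $x \in \mathds{C}_p$ one has $\sigma(c_i) = c_i$ and
\[
|\sigma(x) - c_i| \;=\; |\sigma(x) - \sigma(c_i)| \;=\; |\sigma(x - c_i)| \;=\; |x - c_i|.
\]
Hence $x \in B_i \iff \sigma(x) \in B_i$, so $\sigma(B_i) = B_i$ as sets; the same argument handles a disc around $\infty$ after inverting. Consequently $\sigma\bigl(\mathds{P}^1(\mathds{C}_p) \setminus \bigcup_i B_i\bigr) = \mathds{P}^1(\mathds{C}_p) \setminus \bigcup_i B_i$, i.e.\ $\sigma(F) = F$. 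Since $\sigma \in G$ was arbitrary, $F$ is $G$-invariant.

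The only real subtlety — and what I would flag as the main (mild) obstacle — is to make precise what "defined over $\mathds{Q}_p$" means for the fundamental domain: one must verify that the specific presentation of $F$ used (as a complement of finitely many open discs, possibly including a disc at $\infty$) admits $\mathds{Q}_p$-rational centres simultaneously, so that the isometric-$\mathds{Q}_p$-fixing argument applies uniformly to every piece of the boundary. Once this bookkeeping is made explicit, the proof reduces to the displayed one-line estimate above.
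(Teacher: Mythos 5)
Your proof is correct and follows essentially the same route as the paper: the core step in both is the one-line computation $\absolute{\sigma(x)-a}=\absolute{\sigma(x-a)}=\absolute{x-a}$ for a centre $a\in\mathds{Q}_p$, using that Galois elements fix $\mathds{Q}_p$ and act isometrically, which gives invariance of each defining disc and hence of $F$. The only difference is that you spell out the structure of a good fundamental domain as the complement of $2g$ open discs, a bookkeeping step the paper leaves implicit.
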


\begin{proof}
Given an inequality
\[
\absolute{x-a}\le\absolute{\epsilon}
\]
with $a\in\mathds{Q}_p$, $\epsilon\in\mathds{C}_p$, observe that
\[
\absolute{x^\sigma-a}=\absolute{x^\sigma-a^\sigma}=\absolute{\sigma(x-a)}=\absolute{x-a}
\]
for any $\sigma\in\Gal(\overline{\mathds{Q}}_p/\mathds{Q}_p)$. This proves the Galois invariance of discs centred in points of $\mathds{Q}_p$. This now proves the assertion.
\end{proof}

\begin{proposition}\label{GammaLocalEquity}
Let $\Gamma$ be a Schottky group acting on $\Omega_{K}$. Assume that $F\subset\Omega_K$ is a good fundamental domain defined over $\mathds{Q}_p$. Then 
there exists a $\Gamma$-invariant local equity  measure $\mu$ on $\Omega_{\tr}$ with 
$\mu(F_{\tr})<\infty$.
\end{proposition}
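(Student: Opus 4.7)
The plan is to build $\mu$ Galois orbit by Galois orbit, using the commutation of the Schottky and $p$-adic Galois actions. Since $\Gamma\subset\PGL_2(\mathds{Q}_p)$ acts by M\"obius transformations with rational coefficients, each $\gamma\in\Gamma$ commutes with every $\sigma\in\Gal(\overline{\mathds{Q}}_p/\mathds{Q}_p)$, so $\Gamma$ permutes the Galois orbits $\mathcal{C}(t)$; combined with the preceding lemma, this tells us that $F_{\tr}$ is a disjoint union of whole Galois orbits.

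First I would show that this union is finite. By the ultrametric inequality, each circle $S_\rho^{\tr}$ is open in $\Omega_{\tr}$, and the proof of Theorem \ref{countable} exhibits every such circle as a finite disjoint union of clopen Galois orbits; in particular each $\mathcal{C}(t)$ is clopen in $\Omega_{\tr}$. Since $F_{\tr}$ is compact and is covered by pairwise disjoint open sets, that cover must be finite, say
\[
F_{\tr}=C_1\sqcup\cdots\sqcup C_n.
\]
On each $C_i$ I fix an equity probability measure $\mu_i$ via the construction in \cite[\S 2.2]{LocalUltrametricity}.

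Next I would extend by $\Gamma$-invariance. The fundamental-domain property of $F$ together with the openness of each $C_i$ in $\Omega_{\tr}$ forces $\gamma C_i\neq C_j$ for $\gamma\neq e$ (otherwise $\gamma F\cap F$ would contain a non-empty open set), and since $\Omega_K=\bigcup_{\gamma\in\Gamma}\gamma F$, every Galois orbit $C\subset\Omega_{\tr}$ has the form $C=\gamma C_i$ for uniquely determined $\gamma\in\Gamma$ and $i\in\{1,\dots,n\}$. Setting $\mu_{\gamma C_i}:=\gamma_\ast\mu_i$ and $\mu=\sum_{\gamma,i}\mu_{\gamma C_i}$ produces a measure on $\Omega_{\tr}$ which is $\Gamma$-invariant by construction, restricts to an equity probability measure on every Galois orbit (since $\gamma$ transports the tree structure of $C_i$ to that of $\gamma C_i$), and satisfies $\mu(F_{\tr})=n<\infty$.

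The hard part will be checking that $\mu$ is genuinely Radon. Local finiteness follows from the same openness-and-compactness argument already used for $F_{\tr}$: any compact $K\subset\Omega_{\tr}$ meets only finitely many of the disjoint clopen Galois orbits, each contributing mass at most one. Inner and outer regularity then descend from the individual $\mu_i$, each of which is automatically Radon because $C_i$ is profinite, hence a compact totally disconnected Hausdorff space on which every finite Borel measure is regular.
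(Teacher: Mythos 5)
Your route is genuinely different from the paper's: you transport equity probability measures from the finitely many Galois orbits in $F_{\tr}$ to the rest of $\Omega_{\tr}$ by pushforward along the Schottky action, whereas the paper fixes a $\Gamma$-invariant regular differential form $\omega$, writes $\omega|_F=f_F\,dx$, and takes $\mu=\absolute{f_F}\nu_F$ together with the transformation rule $\nu_{\gamma F}=\absolute{\gamma'}\nu_F$, so that the global invariance of $\omega$ does the gluing for it (and, not incidentally, produces the specific measure $\absolute{\omega}_{\tr}$ that the later spectral computations depend on). Your opening paragraph --- compactness of $F_{\tr}$ plus clopenness of the Galois orbits gives a finite cover, and $\Gamma\subset\PGL_2(\mathds{Q}_p)$ commutes with the Galois action so that $\Gamma$ permutes orbits --- is correct and actually more explicit than what the paper records.

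The gap is in the extension step, specifically the claim that $\gamma C_i\neq C_j$ for $\gamma\neq e$ because ``otherwise $\gamma F\cap F$ would contain a non-empty open set.'' That is archimedean intuition which fails here. For a good fundamental domain in the sense of Gerritzen--van der Put, $F$ is the complement of $2g$ open balls, and while $\gamma F\cap F=\emptyset$ for $\ell(\gamma)\ge2$, the overlaps $F\cap\gamma_k^{\pm1}F$ for the generators are non-archimedean circles $\mathset{x\colon\absolute{x-a}=r}$, which are non-empty \emph{clopen} subsets of $\mathds{P}^1(\mathds{C}_p)$. Worse for your argument, with $a\in\mathds{Q}_p$ the Galois action preserves $\absolute{x-a}$, so such a circle is a union of entire Galois orbits: there really are orbits $C$ with $C=C_j=\gamma_kC_i$ and $\gamma_k\neq e$. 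Consequently the representation $C=\gamma C_i$ is not unique, your sum $\sum_{\gamma,i}\gamma_\ast\mu_i$ assigns mass $2$ rather than $1$ to these orbits (so $\mu$ is not a local equity \emph{probability} measure and $\mu(F_{\tr})\neq n$), and well-definedness of a single-valued assignment would require the compatibility $\mu_j=(\gamma_k)_\ast\mu_i$ on every overlap orbit, which you never impose on your independently chosen $\mu_i$. The defect is repairable --- either choose the boundary measures compatibly, pass to a strict fundamental domain by deleting half of the overlap circles, or simply accept the full $\Gamma$-sum $\sum_{\gamma}\gamma_\ast\nu_F$, which is invariant and still finite on $F_{\tr}$ because only finitely many translates meet $F$ --- but as written the disjointness claim is false and the stated properties of $\mu$ do not follow from your argument.
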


\begin{proof}
The compact $F_{\tr}$ is the union of finitely many Galois orbits $\mathcal{C}(t)$. On each of these orbits define an equity probability measure, and together these define a local equity probability measure $\nu_F$ on $F_{\tr}$. Now, define for $\gamma\in\Gamma$ the measure
\[
\nu_{\gamma F}=\absolute{\gamma'}\nu_F\,,
\]
and obtain a local equity measure with the transformation rule
\begin{align}\label{traforule}
\int_{\Omega_{\tr}} g(\gamma(x))\nu_F(\gamma(x))=\int_{\Omega_{\tr}} g(x)\absolute{\gamma'(x)}\nu_F(x)
\end{align}
for $\Gamma$-invariant integrable functions $g\colon\Omega_{\tr}\to\mathds{C}$. Let $\omega\in\Omega^1(\Omega_p)$ be a regular $\Gamma$-invariant analytic differential $1$-form on $\Omega_p$. Since $\Gamma\setminus\Omega_p$ is a Mumford curve, such exists, and on $F$ it has the form
\[
\omega|_F=f_F\,dx\,,
\]
where $f_F$ is a holomorphic function $F\to\mathds{C}_p$. Then define
\[
\absolute{\omega}_{\tr,F}:=\absolute{f_F}\nu_F
\]
which due to the transformation rule (\ref{traforule}) and the $\Gamma$-invariance of $\omega$,
expressing itself as
\[
f_{\gamma F}(x)=f_F(\gamma(x))=\frac{f_F(x)}{\gamma'(x)}\,,
\]
extends itself to a $\Gamma$-invariant local equity measure $\absolute{\omega}_{\tr}$. Since
\[
\int_{F_{\tr}}\absolute{\omega}_{\tr}<\infty,
\]
it follows that taking $\mu=\absolute{\omega}_{\tr}$
proves the assertion.
\end{proof}

The $\Gamma$-invariant local equity measure $\absolute{\omega}_{\tr}$ defined in the proof of Proposition \ref{GammaLocalEquity}
 depends on the choice of a regular $\Gamma$-invariant analytic differential $1$-form $\omega$ on $\Omega_p$ and a local equity probability measure on the transcendental part $F_{\tr}$ of a fundamental domain $F$. Since the Mumford curve $\Gamma\setminus\Omega_p$ is a projective algebraic curve defined over $\mathds{Q}_p$, it follows that regular differential $1$-forms on it are algebraic.  Hence
the zeros of $\omega$
are $p$-adic algebraic numbers, and thus are already excluded 
from the transcendent $p$-adic upper half plane $\Omega_{\tr}$ on which the measure $\absolute{\omega}_{\tr}$ is defined.
\newline

The construction of the measure $\absolute{\omega}_{\tr}$ compares with the earlier construction on the $p$-adic points of a Mumford curve $\Gamma\setminus\Omega_p$ in \cite{brad_SchottkyDiffusion}, as both use a fixed regular $\Gamma$-invariant analytic differential $1$-form on the domain of regularity of $\Gamma$. 
When constructing the diffusion operator in \cite{brad_SchottkyDiffusion}, it was helpful to leave out the vanishing locus of $\omega$. In the following sections, this step will become unnecessary, as that locus consists of algebraic $p$-adic numbers only.

%%%%%%%%%%%%%%%%%%%5
\subsection{Invariant operators on the transcendent $p$-adic half plane}

Corollary \ref{countable} showed that $\Omega_{\tr}$ is a countable union of Galois orbits, the theory of Schottky-invariant Z\'u\~niga-type operators from \cite{brad_SchottkyDiffusion}
can be adapted to the situation of the transcendent $p$-adic points of a Mumford curve, i.e.\ the points of $\Omega_{\tr}$ invariant under a Schottky group $\Gamma\subset\PGL_2(\mathds{Q}_p)$ freely generated by $g>0$ hyperbolic transformations.
\newline

Let $\mu$ be a $\Gamma$-invariant equity measure on $\Omega_{\tr}$, which exists according to Proposition \ref{GammaLocalEquity}.
Just like in \cite{brad_SchottkyDiffusion}, let $\alpha>0$ such that
\begin{align}\label{assumption_p}
p^\alpha>2g
\end{align}
where  the genus of the Mumford curve
\[
X=\Gamma\setminus\Omega
\]
coincides with $g$.
\newline

Since the group $\Gamma$ is a finitely generated free group, one can fix a set of generators and their inverses as an alphabet, and then view every element of $\Gamma$ as a word over this alphabet. The length $\ell(\gamma)$ of $\gamma\in\Gamma$ is then defined as the number of letters occuring in the word after reduccing all neighbouring pairs of the form
$gg^{-1}$ and $g^{-1}g$, where $g$ is a letter of this alphabet.
\newline

Now define
\[
H_\alpha(\beta x,\gamma y)
=\mu(F_{\tr})^{-1}p^{-\alpha\ell(\beta^{-1}\gamma)}\absolute{\beta x-\gamma y}^{-\alpha}\,,
\]
where $F\subset\Omega$ is a good fundamental domain, and $x,y\in F$, $\beta,\gamma\in\Gamma$. The corresponding Laplacian operator is
\[
\mathcal{H}_\alpha(\beta x)=\sum\limits_{\gamma\in\Gamma}\int_{F_{\tr}} H_\alpha(\beta x,\gamma y)(u(y)-u(x))\mu(y)
\]
where $u\in\mathcal{D}(\Omega_{\tr})^\Gamma$ is a $\Gamma$-invariant locally constant function on $\Omega_{\tr}$. This defines an operator
\[
\mathcal{H}_\alpha\colon \mathcal{D}(\Omega_{\tr})^\Gamma\to
\mathcal{D}(\Omega_{\tr})
\]
for $\alpha>0$. There is now a Dirichlet bilinear form
\begin{align*}
\mathcal{E}_\alpha(u,v)
&=
\langle\mathcal{H}_\alpha u,\mathcal{H}_\alpha v\rangle
\\
&=\sum\limits_{\beta,\gamma\in\Gamma}\iint_{F_{\tr}^2}
H_\alpha(\beta x,\gamma y)(u(y)-u(x))\left(\overline{v(y)}-\overline{v(y)}\right)\,\mu(y)
\end{align*}
and a quadratic Dirichlet form
\[
\mathcal{E}_\alpha(u)=
\langle\mathcal{H}_\alpha u,\mathcal{H}_\alpha\rangle
\]
for $\alpha>0$. 
\begin{Lemma}\label{denselyDefined}
The operator $\mathcal{H}_\alpha$ and the Dirichlet form $\mathcal{E}_\alpha$ are both defined on all of $\mathcal{D}(\Omega_{\tr})^\Gamma$ for $\alpha>0$ satisfying (\ref{assumption_p}).
\end{Lemma}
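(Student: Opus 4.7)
Plan: The lemma reduces to verifying absolute convergence of the double sum/integral defining $\mathcal{H}_\alpha u(\beta x)$ for every $u\in\mathcal{D}(\Omega_{\tr})^\Gamma$, every $\beta\in\Gamma$ and every $x\in F_{\tr}$; an analogous estimate will then cover $\mathcal{E}_\alpha$. The plan rests on three inputs: the local structure of $u$, the exponential growth of $\Gamma$ weighed against the exponential decay $p^{-\alpha\ell(\cdot)}$ in $H_\alpha$, and an ultrametric bound on $\absolute{\beta x-\gamma y}$.

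Since $u$ is $\Gamma$-invariant, locally constant, and compactly supported modulo $\Gamma$, its restriction to the compact set $F_{\tr}$ takes only finitely many values on a finite clopen partition $F_{\tr}=\bigsqcup_k C_k$. In particular $u$ is bounded on $F_{\tr}$, so $\absolute{u(y)-u(x)}\le 2\sup_{z\in F_{\tr}}\absolute{u(z)}$ uniformly, and the integrand vanishes on a neighbourhood of the diagonal $y=x$ in $F_{\tr}\times F_{\tr}$.

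As $\Gamma$ is freely generated by $g$ hyperbolic elements, the number of $\gamma\in\Gamma$ with $\ell(\beta^{-1}\gamma)=n$ is bounded by $2g(2g-1)^{n-1}\le (2g)^n$. Combined with the factor $p^{-\alpha n}$ in $H_\alpha$, this yields a geometric-series bound $\sum_{n\ge 0}(2g/p^\alpha)^n$ that converges precisely under hypothesis (\ref{assumption_p}). What remains is to bound $I_\gamma(x):=\int_{F_{\tr}}\absolute{\beta x-\gamma y}^{-\alpha}\,\mu(y)$ uniformly in $x\in F_{\tr}$ and $\gamma\in\Gamma$. For $\gamma=\beta$, the potential singularity at $y=x$ is killed by the local constancy of $u$, and the $p$-adic analyticity of $\beta$ on $F$ gives $\absolute{\beta x-\beta y}=\absolute{\beta'(x)}\,\absolute{x-y}$ on sufficiently small disks, yielding a finite integral. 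For $\gamma\ne\beta$, the good fundamental domain $F$ is a finite union of $\mathds{Q}_p$-rational open disks (or their complements), and its translates $\gamma F$ form a nested family of pairwise disjoint open subsets of $\Omega$; in the $p$-adic ultrametric, $\absolute{\beta x-\gamma y}$ is therefore equal to the constant distance between the two relevant disks, which supplies the desired uniform lower bound and a correspondingly bounded $I_\gamma$.

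For the Dirichlet form, the same estimate shows $\mathcal{H}_\alpha u$ is bounded on $F_{\tr}$, hence belongs to $L^2(F_{\tr},\mu)$; Cauchy--Schwarz then yields $\absolute{\mathcal{E}_\alpha(u,v)}\le \|\mathcal{H}_\alpha u\|_2\,\|\mathcal{H}_\alpha v\|_2<\infty$, so $\mathcal{E}_\alpha$ is defined on all of $\mathcal{D}(\Omega_{\tr})^\Gamma$. The principal obstacle I anticipate is controlling $\absolute{\beta x-\gamma y}$ as $\ell(\gamma)\to\infty$, when $\gamma F$ accumulates on the limit set of $\Gamma$; it is the ultrametricity of $\mathds{C}_p$ together with the nested-disk structure of Schottky translates that forces $I_\gamma$ to remain bounded and lets the decay $p^{-\alpha n}$ dominate the growth $(2g)^n$ exactly when $p^\alpha>2g$.
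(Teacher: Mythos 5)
Your proposal is correct and follows essentially the same route as the paper's proof: local constancy and $\Gamma$-invariance of $u$ (plus compactness of $F_{\tr}$) make each single-$\gamma$ integral converge and stay bounded, and the count of at most $(2g)^n$ elements of word length $n$ against the decay $p^{-\alpha n}$ yields a convergent geometric series exactly under hypothesis (\ref{assumption_p}), with the Dirichlet form handled by the same estimate. The only cosmetic difference is that for $\gamma\neq\beta$ you invoke a uniform positive lower bound on $\absolute{\beta x-\gamma y}$ (which can fail where adjacent translates of $F$ meet along a boundary circle), whereas the correct mechanism there is the same one you already use for $\gamma=\beta$, namely that local constancy of the invariant $u$ kills the integrand near such near-coincidences; this matches the level of detail in the paper's own argument.
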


\begin{proof}
Let $u\in\mathcal{D}(\Omega_{\tr})^\Gamma$. Then, similarly as in the proof of \cite[Lem.\ 3.2]{brad_SchottkyDiffusion}, let
\begin{align*}
\mathcal{H}_\gamma^\alpha u(x)
&=\int_F H_\alpha(\beta x,\gamma y)(u(y)-u(x))\,\mu(y)
\\
&=\mu(F_{\tr})^{-1}
p^{-\alpha\ell(\beta^{-1}\gamma)}
\int_{F_{\tr}}\absolute{\beta x-\gamma y}^{-\alpha}(u(y)-u(x))\,\mu(y)
\end{align*}
for $x\in F_{\tr}$. If $x,y\in F_{\tr}$, then the distance between $x$ and $\gamma y$ is bounded from below and becomes arbitrarily large. Its values are negative rational powers of $p$ in such a way that $y\mapsto\absolute{\beta x-\gamma y}^{-\alpha}$ is a locally constant function on $F_{\tr}$.
Since $u$ is locally constant on $F_{\tr}$ and all its $\Gamma$-translates, it now follows that the integral converges for all $\gamma\in\Gamma$. Since $F_{\tr}$ is compact, it now follows that $\mathcal{H}_\gamma^\alpha u(x)$ is bounded on $F_{\tr}$. Namely,
by the assumption (\ref{assumption_p}), it follows that the number of
$\gamma\in\Gamma\setminus\mathset{\beta}$ 
such that $\mathcal{H}_\gamma^\alpha u(x)$ is fixed, 
is bounded from above by $(2g)^{\ell(\beta)+\ell}$ 
for $\ell=\ell(\gamma)$, 
cf.\ Lemma \cite[Lem.\ 3.1]{brad_SchottkyDiffusion}.
Hence, together with
\[
\ell(\beta^{-1}\gamma)\le\ell(\beta)+\ell(\gamma)
\]
it follows that
$\mathcal{H}_\alpha u(x)$ is a constant times a series consisting of a positive coefficient times non-negative rational powers of 
\[
(2g) p^{-\alpha},
\]
where the rational exponents form an unbounded increasing sequence. By taking the largest integer smaller than or equal to these exponents, one finds a geometric series depending on $x\in F_{\tr}$ as an upper bound for 
$\mathcal{H}_\alpha u(x)$.
This proves the first assertion.

\smallskip
The proof of the second assertion
is similar as the first assertion, just like that of \cite[Lem.\ 3.3]{brad_SchottkyDiffusion}, and
uses
\[
\mathcal{E}_\alpha(u)
=\mu(F_{\tr})^{-2}\sum\limits_{\beta,\gamma\in\Gamma}
p^{-2\alpha\ell(\beta^{-1}\gamma)}
\iint_{F_{\tr}^2}\absolute{\beta x-\gamma y}^{-\alpha}\absolute{u(y)-u(x)}^2\,\mu(y)\mu(x)
\]
together with a similar argument as in the previous case.
\end{proof}

A consequence of Lemma \ref{denselyDefined} is that operator $\mathcal{H}_\alpha$ and Dirichlet forms $\mathcal{E}_\alpha$ are densely defined on $L^2(\Omega_{\tr},\mu)^\Gamma$.
Let
$\mathcal{H}_\alpha^*\colon L^2(\Omega_{\tr},\mu)\to L^2(\Omega_{\tr},\mu)^\Gamma$ be the adjoint of $\mathcal{H}_\alpha\colon L^2(\Omega_{\tr},\mu)^\Gamma\to L^2(\Omega_{\tr},\mu)$. Thus obtain operators
\begin{align*}
\Delta_\alpha&=\mathcal{H}_\alpha^*\circ\mathcal{H}_\alpha\colon L^2(\Omega_{\tr},\mu)^\Gamma\to L^2(\Omega_{\tr},\mu)^\Gamma
\\
\Delta_\alpha^\dagger&=\mathcal{H}_\alpha\circ\mathcal{H}_\alpha^*\colon L^2(\Omega_{\tr},\mu)\to L^2(\Omega_{\tr},\mu)
\end{align*}
for $\alpha>0$ satisfying (\ref{assumption_p}).

\begin{Lemma}
The operators $\mathcal{H}_\alpha,\mathcal{H}_\alpha^*$ are closed, the operators $\Delta_\alpha,\Delta_\alpha^\dagger$ are self-adjoint, and the operators $I+\Delta_\alpha,I+\Delta_\alpha^\dagger$ have bounded inverses for $\alpha>0$ satisfying (\ref{assumption_p}).
\end{Lemma}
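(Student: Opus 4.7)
The plan is to reduce the whole statement to the classical theorem of von Neumann: for any closed densely defined operator $T\colon H_1 \to H_2$ between Hilbert spaces, the operator $T^*T$ is self-adjoint and non-negative on $H_1$, and $TT^*$ is self-adjoint and non-negative on $H_2$. I take $T = \mathcal{H}_\alpha$, $H_1 = L^2(\Omega_{\tr},\mu)^\Gamma$ and $H_2 = L^2(\Omega_{\tr},\mu)$, so that $\Delta_\alpha = T^*T$ and $\Delta_\alpha^\dagger = TT^*$.

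\textbf{Closability.} First I would verify that $\mathcal{H}_\alpha$, initially defined on $\mathcal{D}(\Omega_{\tr})^\Gamma$, is closable. By Corollary \ref{Polish} and Radon regularity of $\mu$, the test-function space $\mathcal{D}(\Omega_{\tr})$ is dense in $L^2(\Omega_{\tr},\mu)$; a $\Gamma$-averaging argument then yields density of $\mathcal{D}(\Omega_{\tr})^\Gamma$ in $L^2(\Omega_{\tr},\mu)^\Gamma$. The standard criterion for closability is density of $\dom(\mathcal{H}_\alpha^*)$, and I would establish this by showing $\mathcal{D}(\Omega_{\tr})\subset\dom(\mathcal{H}_\alpha^*)$: for $v\in\mathcal{D}(\Omega_{\tr})$ the pairing
\[
\langle \mathcal{H}_\alpha u,v\rangle
= \sum_{\beta,\gamma\in\Gamma}\iint_{F_{\tr}^2}
H_\alpha(\beta x,\gamma y)\bigl(u(y)-u(x)\bigr)\overline{v(\beta x)}\,\mu(y)\mu(x)
\]
should be dominated by $C_v\,\norm{u}_{L^2}$ via the same $(2g)^{\ell(\beta)+\ell(\gamma)}\,p^{-\alpha\ell(\beta^{-1}\gamma)}$ estimate used in Lemma \ref{denselyDefined}, exploiting that $\supp v$ meets only finitely many $\Gamma$-translates of $F_{\tr}$ so that the outer $\beta$-sum truncates.

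\textbf{Self-adjointness and resolvents.} Once closability is in hand, I replace $\mathcal{H}_\alpha$ by its closure $\overline{\mathcal{H}_\alpha}$; its Hilbert-space adjoint $\mathcal{H}_\alpha^*$ is then automatically closed and densely defined, and satisfies $(\mathcal{H}_\alpha^*)^*=\overline{\mathcal{H}_\alpha}$. Applying von Neumann's theorem yields self-adjointness of $\Delta_\alpha$ and $\Delta_\alpha^\dagger$, and non-negativity follows from
\[
\langle \Delta_\alpha u,u\rangle = \norm{\mathcal{H}_\alpha u}^2\ge 0, \qquad \langle \Delta_\alpha^\dagger w,w\rangle=\norm{\mathcal{H}_\alpha^* w}^2\ge 0,
\]
extended by closure from the respective cores. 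Since both operators are non-negative and self-adjoint, the spectral theorem places $\sigma(I+\Delta_\alpha)$ and $\sigma(I+\Delta_\alpha^\dagger)$ inside $[1,\infty)$, so each of $(I+\Delta_\alpha)^{-1}$ and $(I+\Delta_\alpha^\dagger)^{-1}$ exists and has operator norm at most $1$.

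\textbf{Main obstacle.} The delicate step will be closability. When pairing against a generic $v\in\mathcal{D}(\Omega_{\tr})$, which is \emph{not} $\Gamma$-invariant, the collapse of the $\gamma$-sum used in Lemma \ref{denselyDefined} is no longer directly available; one must first use compactness of $\supp v$ to truncate the $\beta$-sum to finitely many indices, and then combine the sub-additivity $\ell(\beta^{-1}\gamma)\le\ell(\beta)+\ell(\gamma)$ with assumption (\ref{assumption_p}) to sum the remaining geometric series in $\gamma$. This is the direct adjoint-side analogue of the estimate carried out in \cite{brad_SchottkyDiffusion} on the $p$-adic points, and it is essentially the only point at which any analytic work beyond citation of abstract operator theory is required.
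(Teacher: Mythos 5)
Your proposal is correct and follows essentially the same route as the paper, whose proof simply defers to \cite[Lem.\ 3.4]{brad_SchottkyDiffusion} --- itself an application of von Neumann's theorem that $T^*T$ and $TT^*$ are self-adjoint with $I+T^*T$, $I+TT^*$ boundedly invertible for a closed densely defined $T$, exactly as you set it up. The one point to watch in your closability estimate is the diagonal contribution ($\beta=\gamma$ and $\absolute{x-y}$ small), where the bound $C_v\norm{u}_{L^2}$ is not delivered by the geometric-series argument alone: since the local constancy scale of $u$ is not uniform, you must symmetrise the inner integral in $x,y$ so that the local constancy of $v$ (rather than of $u$) annihilates the singularity of $\absolute{x-y}^{-\alpha}$ near the diagonal.
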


\begin{proof}
The proof of \cite[Lem.\ 3.4]{brad_SchottkyDiffusion} carries over in a straightforward manner.
\end{proof}

%%%%%%%%%%%%%%%%%%%%%
\subsection{Spectrum of the Schottky invariant operator}

For Schottky invariant diffusion operators defined over the points in a local field of a regular domain for the $p$-adic Schottky group $\Gamma$, the Kozyrev wavelets play an important role in constructing $\Gamma$-invariant
Laplacian eigenfunctions, cf.\ \cite[Thm.\ 4.10]{brad_SchottkyDiffusion}. However, this is not going to be feasible on the transcendent $p$-adic points, because their local profinite structure is not regular, meaning that these are more general ultrametric kinds of ``discs'' than the regular structures one obtains from $p$-adic discs. For this reason, the ultrametric wavelets defined in \cite{XK2005} will play a substitute role in the present case.
\newline

In the following, it will be assumed that the local equity measure $\nu_F$ on $F_{\tr}$ satisfies the following property:
\begin{align}\label{diameterProperty}
\nu_F(A)=\diam(A)
\end{align}
for any disc $A\subset F_{\tr}\subset\Omega_{\tr}\subset\mathds{C}_p$, i.e.\ the diameter is determined via the $p$-adic absolute value $\absolute{\cdot}$. For the measure $\mu$ on $\Omega_{\tr}$, this means that
representing the differential $1$-form $\omega$ locally as
\[
\omega|_{F}=f\,dx
\]
for some analytic function $f$ on $F$, 
allows to conclude:
\begin{Lemma}
It holds true that
\[
\mu(A)=C_A\cdot\nu_F(A)
\]
with $C_A>0$ for any disc $A\subset F_{\tr}$.
\end{Lemma}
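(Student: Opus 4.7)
The plan is to unpack the definition $\mu|_{F_{\tr}}=|f|\,\nu_F$ from the proof of Proposition~\ref{GammaLocalEquity}, giving
\[
\mu(A)=\int_A |f(x)|\,d\nu_F(x),
\]
and then simply set $C_A:=\mu(A)/\nu_F(A)$. Since $\nu_F(A)=\diam(A)>0$ by assumption (\ref{diameterProperty}), the content of the lemma reduces to showing that this ratio is a well-defined strictly positive real number.

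The key input I would invoke is already emphasised in the paragraph just after Proposition~\ref{GammaLocalEquity}: the Mumford curve $\Gamma\setminus\Omega_p$ is a projective algebraic curve over $\mathds{Q}_p$, so the regular differential $\omega$ is algebraic, and its zero locus consists of $\bar{\mathds{Q}}_p$-points. By definition of $\Omega_{\tr}$, no such point lies in $F_{\tr}$, so $|f(x)|>0$ for every $x\in A$.

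Next I would exploit the ultrametric continuity of $f$: whenever $f(x_0)\neq 0$, continuity yields $|f(x)-f(x_0)|<|f(x_0)|$ for $x$ close enough to $x_0$, and the non-archimedean triangle inequality then forces $|f(x)|=|f(x_0)|$. Hence $|f|$ is locally constant on $F\setminus\{f=0\}$, in particular on $A$. As $A$ is a closed subset of the compact set $F_{\tr}$, it is itself compact, and $|f|$ therefore attains a strictly positive minimum $m_A$ and a finite maximum $M_A$ on $A$. Combining the two bounds yields
\[
m_A\cdot\nu_F(A)\le\mu(A)\le M_A\cdot\nu_F(A),
\]
so $C_A\in[m_A,M_A]\subset(0,\infty)$, as required.

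The only genuinely nontrivial ingredient is the algebraicity of the zero locus of $\omega$; once this is in hand, everything else follows from ultrametric continuity of $f$ together with compactness of $A$ inside $F_{\tr}$.
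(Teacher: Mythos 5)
Your argument establishes only the literal scalar identity $\mu(A)=C_A\,\nu_F(A)$ with $C_A:=\mu(A)/\nu_F(A)$, which is vacuous once both quantities are finite and positive: any two positive reals differ by a positive factor. The actual content of the lemma, which the paper's proof delivers and which is what gets used afterwards (the $\Gamma$-invariance of the constant $C_A$, and the applicability of the ultrametric wavelet spectral theory of Theorem 10 of \cite{XK2005}, which requires the measure of every sub-disc to be proportional to its diameter with a \emph{consistent} constant), is that $\absolute{f}$ is \emph{globally constant} on $A$, so that $\mu$ restricted to $A$ equals $C_A\cdot\nu_F$ as a measure and $C_{A'}=C_A$ for every sub-disc $A'\subset A$. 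Your bounds $m_A\,\nu_F(A)\le\mu(A)\le M_A\,\nu_F(A)$ do not give this: with your definition of $C_{A'}$ as a ratio, nothing prevents $C_{A'}\neq C_A$ for $A'\subset A$.

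The step you are missing is that local constancy of $\absolute{f}$ cannot be upgraded to constancy on $A$ by a connectedness argument, since $A$ is totally disconnected. The paper instead invokes the non-archimedean mapping property (\cite[Prop.\ 2.3]{Benedetto2001}): a non-vanishing analytic function on the $\mathds{C}_p$-disc $B$ with $A=B\cap\Omega_{\tr}$ maps $B$ onto a closed disc not containing $0$, and such a disc is contained in a circle centred at $0$, forcing $\absolute{f}$ to be constant on all of $B$, hence on $A$. Note that this uses the hypothesis that $f$ has no zero anywhere in the ambient disc $B$ --- not merely no zero in the transcendent part $A$, which is all your algebraicity argument provides. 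Indeed, if $f$ had a zero at an algebraic point $a\in B\setminus A$, then $\absolute{f(x)}$ would genuinely vary over $A$ (consider $f(x)=x-a$), your min/max bounds would still hold, and yet the intended conclusion would fail; this shows your argument cannot be proving the statement the paper needs.
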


\begin{proof}
It holds true that  
\[
\mu(A)=\int_{A}\absolute{\omega}_{\tr}
=\int_A\absolute{f(x)}\,\nu_F(x)
\]
with 
\[
A=\mathset{x\in\Omega_{\tr}\mid\absolute{x-a}\le \absolute{r}}
\]
with $r\in\mathds{C}_p$.
Since $f$ is a non-vanishing analytic function 
 on the disc
 \[
 B =
 \mathset{x\in\mathds{C}_p\mid\absolute{x-a}\le\absolute{r}}
 \]
 satisfying $A=B\cap\Omega_{\tr}$, it follows that $f(B)\subset\mathds{C}_p$ is a closed disc,  cf.\ \cite[Prop.\ 2.3]{Benedetto2001}. Since this disc does not contain $0\in\mathds{C}_p$, it follows that it is  contained in a circle centred in $0$. Hence, $\absolute{f(x)}$ is constant on $A$. This proves the assertion.
\end{proof}

Because 
\[
\mu(\gamma A)=\int_{\gamma A}\absolute{\omega}_{\tr}=\int_A\frac{\absolute{f(x)}}{\absolute{\gamma'(x)}}\,\nu_F(\gamma x)
=\int_A\absolute{\omega}_{\tr}=\mu(A)
\]
for $\gamma\in\Gamma$, it
is $\Gamma$-invariant, and hence the constant $C_A>0$ is invariant under $\Gamma$.
Notice that by the analytic property of $\gamma\in\Gamma$, the image $\gamma A\subset\Omega_{\tr}$ is also a disc.

\begin{Lemma}
It holds true that
\[
\absolute{\beta x-\gamma y}=\absolute{x-\beta^{-1}\gamma y}
\]
for $x,y\in F_{\tr}$ and $\beta,\gamma\in\Gamma$.
\end{Lemma}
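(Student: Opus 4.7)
The plan is a direct application of the classical M\"obius transformation difference identity. Fixing a matrix representative of $\beta\in\Gamma\subset\PGL_2(\mathds{Q}_p)$ with $a_\beta d_\beta - b_\beta c_\beta = 1$, one has
\[
\beta(u) - \beta(v) = \frac{u-v}{(c_\beta u + d_\beta)(c_\beta v + d_\beta)}.
\]
Setting $u = x$ and $v = \beta^{-1}\gamma y$, so that $\beta(v) = \gamma y$, this becomes
\[
\beta x - \gamma y = \frac{x - \beta^{-1}\gamma y}{(c_\beta x + d_\beta)\cdot(c_\beta \beta^{-1}\gamma y + d_\beta)}.
\]
Passing to $p$-adic absolute values, the lemma reduces to verifying that
\[
\absolute{c_\beta x + d_\beta} \cdot \absolute{c_\beta \beta^{-1}\gamma y + d_\beta} = 1.
\]

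A short direct computation using $\det\beta=1$ gives the simplification $c_\beta \beta^{-1}(\gamma y) + d_\beta = 1/(a_\beta - c_\beta \gamma y)$. Combining this with $\absolute{c_\beta u + d_\beta} = \absolute{c_\beta}\cdot\absolute{u - \beta^{-1}(\infty)}$ and $\absolute{a_\beta - c_\beta w} = \absolute{c_\beta}\cdot\absolute{w - \beta(\infty)}$, the product identity is equivalent to the distance equality
\[
\absolute{x - \beta^{-1}(\infty)} = \absolute{\gamma y - \beta(\infty)},
\]
where both $\beta^{-1}(\infty) = -d_\beta/c_\beta$ and $\beta(\infty) = a_\beta/c_\beta$ lie in $\mathds{P}^1(\mathds{Q}_p)$.

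The final step I would handle using the structure of the good fundamental domain $F$, which is defined over $\mathds{Q}_p$ and bounded by finitely many ultrametric discs centred in $\mathds{P}^1(\mathds{Q}_p)$. The ultrametric property of $\mathds{C}_p$ makes $\absolute{u - q}$ constant for $u$ ranging over a ball disjoint from $q \in \mathds{P}^1(\mathds{Q}_p)$, so both sides of the distance equality are constants depending only on the boundary data of $F$ (respectively $\gamma F$) and on the fixed-point structure of $\beta$. Matching them uses the standard pairing of the discs bounding the fundamental domain of a Schottky group.

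The principal obstacle is precisely this last matching of constants. The argument must track carefully how the attracting/repelling fixed points of $\beta$ and the images $\beta(\infty),\beta^{-1}(\infty)$ sit with respect to the boundary discs of $F$ and $\gamma F$; this is a geometric consequence of the Schottky dynamics but requires a careful unwinding of the good fundamental domain definition and of the chosen matrix normalization of the generators of $\Gamma$.
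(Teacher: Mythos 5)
There is a genuine gap, and it sits exactly where you flag it. The paper offers no argument of its own for this lemma---its proof is the single sentence that the proof of Lemma 4.8 of \cite{brad_SchottkyDiffusion} carries over---so the value of a self-contained attempt lies entirely in closing the reduction, and yours does not close. The difference identity $\beta(u)-\beta(v)=(u-v)/\bigl((c_\beta u+d_\beta)(c_\beta v+d_\beta)\bigr)$ is correct and correctly reduces the claim to $\absolute{c_\beta x+d_\beta}\cdot\absolute{c_\beta\beta^{-1}\gamma y+d_\beta}=1$. But the mechanism you propose for verifying this fails: a good fundamental domain $F$ is the \emph{complement} of finitely many open discs, not a disc, so the ultrametric inequality does not make $\absolute{x-\beta^{-1}(\infty)}$ constant as $x$ ranges over $F_{\tr}$; the two sides of your distance equality are therefore not ``constants depending only on the boundary data'' that could be matched against each other, and the last step cannot be completed as described.

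Worse, a direct test shows the identity you need is not a formal consequence of the Schottky pairing of boundary discs. Take $\Gamma=\langle\beta\rangle$ with $\beta(z)=qz$, $\absolute{q}<1$, the standard good fundamental domain $F=\mathset{\absolute{q}\le\absolute{z}\le1}$ (discs centred at $0$ and $\infty$, both in $\mathds{P}^1(\mathds{Q}_p)$), and $\gamma=\id$. A determinant-one representative has $c_\beta=0$ and $d_\beta=q^{-1/2}$, so the required product equals $\absolute{q}^{-1}\neq1$; equivalently, for generic $x,y\in F_{\tr}$ one computes $\absolute{\beta x-y}=\absolute{y}$ while $\absolute{x-\beta^{-1}y}=\absolute{y}/\absolute{q}$. (This example also shows that the expressions $\beta(\infty)=a_\beta/c_\beta$ and $\beta^{-1}(\infty)=-d_\beta/c_\beta$ you rely on degenerate when $\infty$ is a limit point.) So, read literally with the standard absolute value on $\mathds{C}_p$ and an arbitrary good fundamental domain, the statement cannot be established along your route; whatever normalisation of the metric, the kernel, or the fundamental domain makes the cited Lemma 4.8 go through would have to be imported explicitly, and your argument does not supply it.
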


\begin{proof}
The proof of \cite[Lem.\ 4.8]{brad_SchottkyDiffusion} carries over to this case.
\end{proof}

In the $p$-adic case, the Kozyrev wavelets \cite{Kozyrev2004} turn out to be eigenfunctions of certain integral operators. In the present case, this role is taken by the ultrametric wavelets from \cite{XK2005} which can be defined with the help of an equity measure by representing the $m(v)$ children of any given node $v$ of a rooted tree by a cyclic group $C_{m(v)}$. An ultrametric wavelet is  supported on the boundary of such a tree and is constant on the boundary of the branch beginning in each child node of the root vertex a unit root, taking its value from evaluating a unitary character of $C_{m(v)}$.
This approach has been used in \cite[Def.\ 3.11]{BL-topoIndex_p} in order to be able to approximate graph Laplacians with $p$-adic Laplacians having a a  kernel function defined by an ultrametric.
\newline

The following Lemma generalises the observation of \cite[Lem.\ 3.12]{BL-topoIndex_p}:

\begin{Lemma}\label{meanZeroWavelet}
Let $\psi$ be an ultrameric wavelet supported in a disc $A\subset F_{\tr}$. Then
\[
\int_A\psi(x)\mu(x)=0
\]
holds true.
\end{Lemma}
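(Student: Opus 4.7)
The plan is to unpack the explicit definition of the ultrametric wavelet given in the introductory paragraph to the lemma and then reduce the integral to the character-orthogonality identity on a finite cyclic group. Write $v$ for the vertex of the tree whose boundary is $A$, let $m=m(v)$ be the number of its children, and let $A_0,\dots,A_{m-1}$ be the child subdiscs of $A$. By construction, there is a non-trivial unitary character $\chi$ of $C_m$ such that
\[
\psi(x)=\sum_{k=0}^{m-1}\chi(k)\,\mathbf{1}_{A_k}(x),
\]
so that
\[
\int_A\psi(x)\,\mu(x)=\sum_{k=0}^{m-1}\chi(k)\,\mu(A_k).
\]

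Next I would show that all the $\mu(A_k)$ are equal. Two ingredients combine here. First, by the previous Lemma, $\mu(A_k)=C_{A_k}\nu_F(A_k)$, and the proof of that lemma shows that the constant $C_{A_k}$ coincides with $\absolute{f}$ evaluated on the ambient disc $A$, since $\absolute{f}$ is constant on $A$ and therefore on each of its subdiscs $A_k$. Hence $C_{A_0}=\dots=C_{A_{m-1}}=C_A$. Second, the equity property of $\nu_F$ together with (\ref{diameterProperty}) forces $\nu_F(A_k)=\nu_F(A)/m$ for every $k$: the children of a vertex in an equity tree carry equal shares of the parent measure, as recalled from the construction in \cite[\S 2.2]{LocalUltrametricity}. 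Consequently $\mu(A_k)=C_A\nu_F(A)/m=\mu(A)/m$, independent of $k$.

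Plugging this back gives
\[
\int_A\psi(x)\,\mu(x)=\frac{\mu(A)}{m}\sum_{k=0}^{m-1}\chi(k)=0,
\]
since the sum of the values of a non-trivial character over a finite cyclic group vanishes. The main point that requires care is the verification that the constant $C_{A_k}$ is actually the same for all children, so that the measure truly distributes equally among them; once the earlier lemma on the local constancy of $\absolute{f}$ is invoked, the rest reduces to the well-known character sum identity.
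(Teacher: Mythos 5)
Your proof is correct and takes essentially the same route as the paper: the paper's proof is a one-line appeal to the local equity property of $\mu$, and your argument is exactly the detailed unpacking of that appeal --- equal child measures plus the vanishing character sum over $C_m$. Your added care in checking that the constant $C_{A_k}=\absolute{f}|_A$ is the same for all children (so that $\mu$ inherits the equity property from $\nu_F$) is a worthwhile detail the paper leaves implicit.
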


\begin{proof}
This follows immediately from the local equity property of the measure $\mu$ on $\Omega_{\tr}$.
\end{proof}

The following result  corresponds with \cite[Lem.\ 4.9]{brad_SchottkyDiffusion}:

\begin{Lemma}
It holds true that
\begin{align*}
\int_{F_{\tr}}&
\absolute{x-y}^{-\alpha}(\psi_A(y)-\psi_A(x))\mu(y)
\\
&=-\left(
\int_{F_{\tr}\setminus A}\absolute{x-y}^{-\alpha}\mu(y)+\mu(A)^{1-\alpha}
\right)\psi_A(x)
\end{align*}
for $\alpha>0$ satisfying (\ref{assumption_p}) with $\psi_A$ an ultrametric wavelet supported in a disc $A\subset F_{\tr}$.
\end{Lemma}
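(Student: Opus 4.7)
The plan is to split the integration domain $F_{\tr}$ into $A$ and $F_{\tr}\setminus A$, and to further decompose $A$ into the child discs on which $\psi_A$ is constant, exploiting the wavelet's mean-zero property together with the ultrametric structure.

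First I would dispose of the case $x\notin A$. Since $\psi_A$ is supported in $A$, the right-hand side vanishes, while the left-hand side reduces to $\int_A\absolute{x-y}^{-\alpha}\psi_A(y)\,\mu(y)$. By the ultrametric isosceles property, the distance $\absolute{x-y}$ is constant (equal to $\dist(x,A)$) as $y$ ranges over $A$, so this integral equals a constant times $\int_A\psi_A\,d\mu$, which vanishes by Lemma \ref{meanZeroWavelet}.

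For the main case $x\in A$, I split the integral as
\begin{align*}
\int_{F_{\tr}}=\int_{F_{\tr}\setminus A}+\int_A.
\end{align*}
On $F_{\tr}\setminus A$ one has $\psi_A(y)=0$, so that piece contributes precisely $-\psi_A(x)\int_{F_{\tr}\setminus A}\absolute{x-y}^{-\alpha}\mu(y)$, matching the first term of the right-hand side. It then remains to show that the $A$-piece equals $-\mu(A)^{1-\alpha}\psi_A(x)$.

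To compute the $A$-piece, write $A=\bigsqcup_{j=1}^{m}A_j$ as the disjoint union of its children in the tree of Galois orbits, and note that $\psi_A|_{A_j}=c_j$ with the mean-zero relation $\sum_{j=1}^{m}c_j\mu(A_j)=0$. If $x\in A_{i_0}$, then the integrand vanishes identically on $A_{i_0}$ because $\psi_A(y)=\psi_A(x)=c_{i_0}$ there. For every other child $A_j$ with $j\neq i_0$, the ultrametric isosceles property forces $\absolute{x-y}=\diam(A)$ for every $y\in A_j$, since sibling subdiscs of $A$ are separated by exactly the parent diameter. Hence the $A$-piece equals
\begin{align*}
\diam(A)^{-\alpha}\sum_{j\neq i_0}(c_j-c_{i_0})\mu(A_j)
=\diam(A)^{-\alpha}\bigl(-c_{i_0}\mu(A_{i_0})-c_{i_0}(\mu(A)-\mu(A_{i_0}))\bigr)
=-c_{i_0}\mu(A)\diam(A)^{-\alpha},
\end{align*}
after invoking the mean-zero identity. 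Applying the diameter assumption (\ref{diameterProperty}) together with the preceding lemma identifying $\mu(A)$ with $\diam(A)$ up to the local constancy of $\absolute{f}$ on $A$, this simplifies to $-\mu(A)^{1-\alpha}\psi_A(x)$, finishing the computation.

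The main obstacle I expect is the verification that the ultrametric isosceles identity $\absolute{x-y}=\diam(A)$ for $x,y$ in distinct siblings still holds in the \emph{irregular} locally pro-finite structure of $\Omega_{\tr}$, where child discs need not be equal in size or number as in the standard $p$-adic case. This reduces to showing that the tree structure on the Galois orbits is faithfully modelled by the metric on $\mathds{C}_p$, which is precisely the content of the topological identification in the proof of Theorem \ref{countable}. A secondary bookkeeping point is the reconciliation of $\diam(A)^{-\alpha}\mu(A)$ with $\mu(A)^{1-\alpha}$, for which the local constancy of $\absolute{f}$ on an ultrametric disc established in the preceding lemma is essential and must be tracked through the identification $\mu(A)=C_A\cdot\nu_F(A)$.
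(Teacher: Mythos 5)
Your proof is correct in outline but takes a genuinely different route from the paper. The paper disposes of the statement in one line by citing \cite[Thm.\ 10]{XK2005}, asserting that the kernel $\absolute{x-y}^{-\alpha}$ satisfies condition (21) of that paper (i.e.\ that it depends only on the join of $x$ and $y$ in the tree of Galois orbits). You instead carry out the underlying verification by hand: the split of $F_{\tr}$ into $F_{\tr}\setminus A$ and $A$, the further decomposition of $A$ into sibling child discs, and the use of the mean-zero property (Lemma \ref{meanZeroWavelet}) reproduce exactly the computation behind the cited theorem, and your treatment of the case $x\notin A$ via the isosceles property is also correct. What your version buys is transparency: it isolates precisely the two structural inputs that the citation hides.

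First, the identity $\absolute{x-y}=\diam(A)$ for $x,y$ in distinct children of $A$ is exactly the content of condition (21) of \cite{XK2005} for this kernel, so the ``main obstacle'' you name is not an artifact of your method: the paper's own proof needs the same fact, and it genuinely requires the \emph{metric} (not merely topological) compatibility of the tree of Galois orbits with $\absolute{\cdot}$ on $\mathds{C}_p$, which is more than what the proof of Theorem \ref{countable} explicitly records. Second, your final simplification is where the one real discrepancy sits: your computation yields $-\psi_A(x)\,\mu(A)\,\diam(A)^{-\alpha}$ for the $A$-piece, and since the preceding lemma gives $\mu(A)=C_A\cdot\nu_F(A)=C_A\,\diam(A)$ with $C_A=\absolute{f}$ on $A$, this equals $-C_A^{\alpha}\,\mu(A)^{1-\alpha}\,\psi_A(x)$ rather than $-\mu(A)^{1-\alpha}\,\psi_A(x)$, unless $\absolute{f}\equiv1$ on $A$. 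So the constant cannot simply be ``tracked through'' as you suggest; either one normalises so that $C_A=1$, or the eigenvalue term should read $\diam(A)^{-\alpha}\mu(A)$. This is a point your computation surfaces and the paper's citation obscures; it does not affect the qualitative conclusions (eigenfunction property, sign, finiteness of multiplicities) but it does change the stated constant.
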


\begin{proof}
This follows from
\cite[Thm.\ 10]{XK2005}, since condition \cite[eq.\ (21)]{XK2005} is satisfied for the kernel function.
\end{proof}

Just like in \cite{brad_SchottkyDiffusion}, an ultrametric wavelet $\psi_A$ supported in $A\subset F_{\tr}$ can be extended to a $\Gamma$-invariant function $\psi_A^\Gamma$
on $\Omega_{\tr}$ via
\[
\psi_A^\Gamma(\gamma x)=\psi_A(x)
\]
for $\gamma\in\Gamma$, called \emph{$\Gamma$-invariant ultrametric wavelet}. Of interest are those wavelets $\psi_A^\Gamma$ supported on the $\Gamma$-translates of discs $A\subset F_{\tr}$ such that
\begin{align}\label{discTr}
A=B\cap\Omega_{\tr}
\end{align}
with $B$ a disc of equal radius as $A$ inside $F_{\tr}\setminus V(\omega)$. The $p$-adic disc $B$ of equal radius as $A$ satisfying (\ref{discTr})
is called a \emph{$\mathds{C}_p$-extension} of $A$. The orbit $\Gamma A$ is called an \emph{invariant disc} in $\Omega_{\tr}$, and $\Gamma B$ an \emph{invariant $\mathds{C}_p$-extension} of $A$.
Define
\begin{align*}
\Omega_{\tr}^\omega&=
\mathset{x\in\Omega_{\tr}\mid
\text{\begin{minipage}{6cm}$\exists$ disc $A\ni x$ having an invariant $\mathds{C}_p$-extension $\Gamma B\subset\Omega\setminus V(\omega)$
\end{minipage}}
}
\\
F_{\tr}^\omega&=F\cap\Omega_{\tr}^\omega
\end{align*}
where $\omega$ is the given regular $\Gamma$-invariant differential $1$-form on $\Omega$.

\begin{thm}\label{MumfordEV}
The space $L^2(\Omega_{\tr}^\omega,\mu)^\Gamma$ of $\Gamma$-invariant $L^2$-functions on $\Omega_{\tr}^\omega$ has an orthonormal basis consisting of the $\Gamma$-invariant ultrametric wavelets $\psi_A^\Gamma$ supported in $\Omega_{\tr}^\omega$, and these are eigenfunctions of $\Delta_\alpha^{\frac12}$ for $\alpha>0$. The eigenvalue corresponding to $\psi_A^\Gamma$ is
\begin{align*}
\lambda_A^{(\alpha)}&=-\mu(F_{\tr}^\omega)^{-1}
\sum\limits_{\gamma\in\Gamma}
p^{-\alpha\ell(\gamma)}
\left(
\int_{F_{\tr}^\omega\setminus A}
\absolute{x-\gamma y}^{-\alpha}\mu(y)+\mu(A)^{1-\alpha}
\right)
\end{align*}
for $A\subset F_{\tr}^\omega$, and $F\subset\Omega$ a good fundamental domain for the action of $\Gamma$. Here, $x\in A$, and $\lambda_A$ does not depend on $x\in A$. The multiplicity of $\lambda_A$ is finite. Both, $\lambda_A$ and its multiplicity, are invariant under replacing $F$ with $\gamma F$ for $\gamma\in\Gamma$. The restriction of $\Delta_\alpha^{\frac12}$ to $L^2(\Omega_{\tr}^\omega)^\Gamma$ coincides with $-\mathcal{H}_\alpha$ for $\alpha>0$ satisfying (\ref{assumption_p}).
\end{thm}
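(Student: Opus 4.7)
The plan is to parallel the argument of \cite[Thm.\ 4.10]{brad_SchottkyDiffusion}, substituting the ultrametric wavelets of \cite{XK2005} for the Kozyrev wavelets used there. First, I would establish the orthonormal basis claim: by Theorem \ref{countable}, $F_{\tr}^\omega$ is, up to a set of measure zero, a countable disjoint union of Galois orbits, each of which is the boundary of a locally finite rooted tree carrying the equity measure thanks to (\ref{diameterProperty}); on each such boundary the ultrametric wavelets of \cite{XK2005} form an orthonormal basis. Combining these bases over all orbits in $F_{\tr}^\omega$ and extending by $\Gamma$-invariance produces an orthonormal system in $L^2(\Omega_{\tr}^\omega,\mu)^\Gamma$, whose totality is a consequence of the decomposition $\Omega_{\tr}^\omega=\bigsqcup_{\gamma\in\Gamma}\gamma F_{\tr}^\omega$ together with the $\Gamma$-invariance of $\mu$.

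For the eigenvalue formula, I would apply $\mathcal{H}_\alpha$ to $\psi_A^\Gamma$ at a point $\beta x$ with $x\in F_{\tr}^\omega$, use $\Gamma$-invariance of $\psi_A^\Gamma$ together with the isometry identity $|\beta x-\gamma y|=|x-\beta^{-1}\gamma y|$ from the preceding lemma, and substitute $\delta=\beta^{-1}\gamma$ to rewrite the sum as
\begin{align*}
\mathcal{H}_\alpha\psi_A^\Gamma(\beta x)=\mu(F_{\tr}^\omega)^{-1}\sum_{\delta\in\Gamma}p^{-\alpha\ell(\delta)}\int_{F_{\tr}^\omega}|x-\delta y|^{-\alpha}(\psi_A(y)-\psi_A(x))\mu(y).
\end{align*}
For every $\delta\in\Gamma$ the kernel $(x,y)\mapsto|x-\delta y|^{-\alpha}$ inherits the structural property \cite[eq.\ (21)]{XK2005} needed for \cite[Thm.\ 10]{XK2005} on $F_{\tr}^\omega$, so the lemma immediately preceding the theorem applies verbatim with $y$ replaced by $\delta y$, and each inner integral collapses to $-(\int_{F_{\tr}^\omega\setminus A}|x-\delta y|^{-\alpha}\mu(y)+\mu(A)^{1-\alpha})\psi_A(x)$. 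Summation over $\delta$ then yields the stated $\lambda_A^{(\alpha)}$. Its independence of $x\in A$ comes from the ultrametric inequality forcing $|x-\delta y|$ to be constant on $A$ for each $y$ outside $A$, while invariance of $\lambda_A^{(\alpha)}$ and its multiplicity under $F\mapsto\gamma_0F$ follows from the reindexing $\delta\mapsto\gamma_0^{-1}\delta\gamma_0$ of $\Gamma$ combined with the $\Gamma$-invariance of $\mu$ and $|\cdot|$.

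Finite multiplicity is handled by noting that, in the explicit formula for $\lambda_A^{(\alpha)}$, the standalone contributions $\mu(A)^{1-\alpha}$ recover $\mu(A)$, and by Theorem \ref{countable} the compact set $F_{\tr}^\omega$ contains only finitely many discs of a given measure modulo $\Gamma$, each supporting only finitely many ultrametric wavelets per \cite{XK2005}. Since the orthonormal basis $\{\psi_A^\Gamma\}$ diagonalises $\mathcal{H}_\alpha$ with real non-positive eigenvalues on $L^2(\Omega_{\tr}^\omega,\mu)^\Gamma$, the restriction of $\mathcal{H}_\alpha$ there is self-adjoint and negative semidefinite, so $\Delta_\alpha^{1/2}=(\mathcal{H}_\alpha^*\mathcal{H}_\alpha)^{1/2}=-\mathcal{H}_\alpha$ on this subspace, and the eigenvalue of $\psi_A^\Gamma$ under $\Delta_\alpha^{1/2}$ equals $-\lambda_A^{(\alpha)}\ge 0$ as claimed. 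The main obstacle will be the rigorous justification of the termwise application of \cite[Thm.\ 10]{XK2005} to each translated kernel $|x-\delta y|^{-\alpha}$ and the absolute convergence of the resulting series over $\Gamma$, both of which require the wordlength bound of Lemma \ref{denselyDefined} and the growth condition (\ref{assumption_p}).
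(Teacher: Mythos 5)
Your proposal follows essentially the same route as the paper: the paper's own proof is a single sentence asserting that the argument of \cite[Thm.\ 4.10]{brad_SchottkyDiffusion} carries over via the preceding lemmas (the isometry identity $\absolute{\beta x-\gamma y}=\absolute{x-\beta^{-1}\gamma y}$, the mean-zero property, and the wavelet identity imported from \cite[Thm.\ 10]{XK2005}), and your write-up is precisely that transplantation made explicit, including the reindexing $\delta=\beta^{-1}\gamma$ and the termwise collapse of the inner integrals. The one point worth flagging is the sign: your computation yields $\mathcal{H}_\alpha\psi_A^\Gamma=\lambda_A^{(\alpha)}\psi_A^\Gamma$ and hence $\Delta_\alpha^{1/2}\psi_A^\Gamma=-\lambda_A^{(\alpha)}\psi_A^\Gamma$, which is consistent with the theorem's final sentence ($\Delta_\alpha^{1/2}=-\mathcal{H}_\alpha$ on the invariant subspace) but not with its literal assertion that the negative quantity $\lambda_A^{(\alpha)}$ is the eigenvalue of the nonnegative operator $\Delta_\alpha^{1/2}$ --- an ambiguity already present in the theorem statement itself rather than a defect of your argument.
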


This theorem corresponds with \cite[Thm.\ 4.10]{brad_SchottkyDiffusion}.

\begin{proof}
The proof of \cite[Thm.\ 4.10]{brad_SchottkyDiffusion} now carries over, because it can use the analogous Lemmas stated prior to this theorem in a similar way. 
\end{proof}

%%%%%%%%%%%%55
\subsection{Invariant diffusion operators for Shimura curves}

A $p$-adically uniformised Shimura curve can be viewed as a locally ultrametric space with disjoint pieces invariant under a Schottky group. This gives rise to new Laplacian operators.
\newline

Now, that Corollary \ref{countable} showed that $\Omega_{\tr}$ is a countable union of Galois orbits, the theory of Schottky-invariant Z\'u\~niga-type operators from \cite{brad_SchottkyDiffusion}
can be adapted to the situation of a $p$-adically uniformised Shimura curve.
\newline

Assume that the Shimura curve $S_N^{\rig}$ is defined over $\mathds{Q}_p$ and has its semi-stable reduction  also defined over $\mathds{Q}_p$. The latter is a necessary technicality
from algebraic geometry, and can be achieved by a sufficiently large finite
field extension of $\mathds{Q}_p$, if necessary, cf.\ e.g.\ \cite[Thm.\ 5.5.3]{FP2004}. 
Given the transcendent $p$-adic points of a Shimura curve as in (\ref{GivenShimura}), let $\omega_1,\dots,\omega_N\in\Omega^1(\Omega_p)$ with $\omega_i$ being a $\Gamma_i'$-invariant non-vanishing regular differential $1$-form on $\Omega_p$. 
This is possible, since the Shimura curve is defined over $\mathds{Q}_p$.
Define
the following spaces:
\begin{align*}
\Omega_i&=\Omega\times\mathset{i},&\Omega_{i,\tr},=\Omega_{\tr}\times\mathset{i}
\\
\Omega'&=\bigsqcup\limits_{i=1}^N\Omega_i,&\Omega'_{\tr}=\bigsqcup\limits_{i=1}^N\Omega_{i,\tr}\,,
\end{align*}
and fix a $\Gamma$-invariant local equity measure $\absolute{\omega_i}_{\tr}$ on $\Omega_i'$ as in Proposition \ref{GammaLocalEquity} for $i=1,\dots, N$. Then define
\[
\mu_{\tr}=\absolute{\omega_1}_{\tr}\sqcup\dots\sqcup\absolute{\omega_N}_{\tr}
\]
as their co-product measure on $\Omega'_{\tr}$. Further, define
\begin{align*}
\Omega_{i,\tr}^{\omega_i}&=\Omega_{\tr}^{\omega_i}\times\mathset{i},
&\Omega_{\tr}^{\omega}=\bigsqcup\limits_{i=1}^N\Omega_{i,\tr}^{\omega_i}
\end{align*}
as well as
\[
\underline{\Gamma}=\Gamma_1'\times\cdots\times\Gamma_N'
\]
and
\[
F'=\bigsqcup\limits_{i=1}^NF_i,\quad F^\omega=F'\cap\Omega^\omega_{\tr}
\]
where $F_i$ is the transcendent part of a good fundamental domain in $\Omega_i$. The subpspace of a function space 
\[
\mathcal{F}(\Omega^\omega_{\tr})=\mathcal{F}(\Omega_{1,\tr}^{\omega_1})\oplus\cdots\oplus\mathcal{F}(\Omega_{N,\tr}^{\omega_N})
\]
invariant under $\underline{\Gamma}$ is denoted as
and understood as
\[
\mathcal{F}(\Omega^\omega_{\tr})^{\underline{\Gamma}}=\bigoplus\limits_{i=1}^N
\mathcal{F}(\Omega_{i,\tr}^{\omega_i})^{\Gamma_i'}
\]
where each summand is the subspace of the corresponding summand above invariant under the corresponding Schottky group. An exception is the notation $\mathcal{D}(\Omega_{i,\tr}^{\omega_i})^{\Gamma_i'}$ and $\mathcal{D}(\Omega_{\tr}^\omega)^{\underline{\Gamma}}$ which stands just for locally constant and invariant functions. These are extensions of compactly supported functions on a good fundamental domain.
\newline

Now, define an integral operator for $\alpha_1,\dots,\alpha_N>0$ each satisfying the corresponding condition (\ref{assumption_p}) as follows:

\begin{align}\label{ShimuraLaplacian}
\mathcal{L}\colon L^2(\Omega_{\tr}^\omega,\mu_{\tr})^{\underline{\Gamma}}\to L^2(\Omega_{\tr}^\omega,\mu_{\tr})^{\underline{\Gamma}},\;u\mapsto\mathcal{Z}u
-\sum\limits_{i=1}^N\Delta_{i,\alpha_i}^{\frac12}
\end{align}
with
$\Delta_{i,\alpha_i}^{\frac12}$
equal to the operator $\Delta_{\alpha_i}^{\frac12}$ on $L^2(\Omega_{i,\tr}^{\omega_i},\mu_i)^{\Gamma_i'}$, 
\[
\mathcal{Z}u(x)=\int_{\Omega'_{\tr}} Z(x,y)(u(y)-u(x))\,d\mu_{\tr}(y)
\]
and kernel function
\[
Z(x,y)=\sum\limits_{i=1\atop i\neq j}^N
\sum\limits_{\gamma_i\in\Gamma_i'}\sum\limits_{\gamma_j\in\Gamma_j'}
w(\gamma_i x,\gamma_j y)\,
\Omega(x\in F_{i})\Omega(y\in F_{j})
\]
for $x,y\in F'$,
$F_i\subset\Omega_{i,\tr}$ a good fundamental domain for $\Gamma_i'$,
and 
\[
w(\gamma_i x,\gamma_j y)=
p^{-\alpha_Z(\ell(\gamma_i)+\ell(\gamma_j))}w_{ij}
\]
with $\alpha_Z>>0$,
$\gamma_k\in\Gamma_k'$, $w_{ij}\ge0$ and $w_{ij}=w_{ji}$ for $i\neq j$ and $i,j=1,\dots,N$.

\begin{Lemma}\label{ZunigaEV}
Let $\psi_A^{\Gamma_{i_0}'}$ be a $\Gamma_{i_0}$-invariant ultrametric wavelet supported in the orbit of $A\subset\Omega_{i_0,\tr}^{\omega_{i_0}}$. 
Assuming that $\alpha_Z>>0$, this function is an eigenfunction of $\mathcal{Z}$ with eigenvalue
\[
\lambda_{i_0}=-\sum\limits_{j=1\atop j\neq i_0}^N
\sum\limits_{\gamma_i\in\Gamma_{i_0}'}
\sum\limits_{\gamma_j\in\Gamma_j'}
p^{-\ell(\gamma_i)-\ell(\gamma_j)}
w_{i_0,j}
\,\mu(F_j)
\]
which only depends on $i_0\in\mathset{1,\dots,N}$.
\end{Lemma}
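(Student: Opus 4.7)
The plan is to verify $\mathcal{Z}\psi = \lambda_{i_0}\psi$ directly, where $\psi := \psi_A^{\Gamma_{i_0}'}$. Since $\psi$ is supported in the $i_0$-th summand $\Omega_{i_0,\tr}^{\omega_{i_0}}$ and is extended by zero elsewhere, it is natural to evaluate $\mathcal{Z}\psi(x)$ on each fundamental domain $F_k$ separately, distinguishing the cases $k = i_0$ and $k \neq i_0$. Throughout, I will exploit the key feature that the weight $w(\gamma_i x, \gamma_j y) = p^{-\alpha_Z(\ell(\gamma_i)+\ell(\gamma_j))} w_{ij}$ is entirely independent of the points $x$ and $y$, so it can be factored out of every integral.

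First I would treat $x \in F_k$ with $k \neq i_0$, where the claim is that both sides vanish. The right side is zero since $\psi(x) = 0$. For the left side, the indicator $\Omega(x \in F_k)$ in the definition of $Z(x,y)$ selects $i = k$, so $\mathcal{Z}\psi(x)$ reduces to a sum over $j \neq k$. The integrand $\psi(y) - \psi(x) = \psi(y)$ is nonzero only when $y \in F_{i_0}$, yielding
\[
\mathcal{Z}\psi(x) = \sum_{\gamma_k \in \Gamma_k'}\sum_{\gamma_{i_0} \in \Gamma_{i_0}'} p^{-\alpha_Z(\ell(\gamma_k)+\ell(\gamma_{i_0}))} w_{k,i_0} \int_{F_{i_0}} \psi(y)\, d\mu_{i_0}(y),
\]
and the inner integral vanishes by Lemma \ref{meanZeroWavelet} applied to $\psi_A$ on $A \subset F_{i_0}$.

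Next I would take $x \in F_{i_0}$. Here only the pairs $(i,j) = (i_0, j)$ with $j \neq i_0$ contribute to $Z(x,y)$, and for such $j$ one has $\psi(y) = 0$ on $F_j$, so $\psi(y) - \psi(x) = -\psi(x)$ throughout the support of the kernel. Pulling $\psi(x)$ out of the integral and using that the weight is constant in $y$, the remaining $y$-integral of $1$ over $F_j$ equals $\mu(F_j)$, giving
\[
\mathcal{Z}\psi(x) = -\psi(x) \sum_{j \neq i_0}\sum_{\gamma_{i_0} \in \Gamma_{i_0}'}\sum_{\gamma_j \in \Gamma_j'} p^{-\alpha_Z(\ell(\gamma_{i_0})+\ell(\gamma_j))} w_{i_0,j}\, \mu(F_j),
\]
which is exactly $\lambda_{i_0}\psi(x)$ and depends only on $i_0$ as asserted.

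The main obstacle is the absolute convergence of these double series, which is where the hypothesis $\alpha_Z \gg 0$ enters. Following the word-counting argument already used in Lemma \ref{denselyDefined} and \cite[Lem.\ 3.1]{brad_SchottkyDiffusion}, each Schottky group $\Gamma_k'$ contains at most $(2 g_k)^\ell$ elements of length $\ell$, where $g_k$ denotes the rank of $\Gamma_k'$. Hence each inner sum is majorised by a geometric series in $(2 g_k)\, p^{-\alpha_Z}$, so choosing $\alpha_Z$ large enough that $p^{\alpha_Z} > 2 g_k$ simultaneously for every $k = 1, \dots, N$ secures absolute convergence uniformly in $x$, thereby justifying the Fubini-type interchanges made above and completing the proof.
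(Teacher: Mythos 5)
Your proof is correct and takes essentially the same route as the paper's: factor the point-independent weight $w(\gamma_i x,\gamma_j y)$ out of the integrals, reduce $\int_{F_j}(\psi(y)-\psi(x))\,\mu_{\tr}(y)$ to $-\mu(F_j)\psi(x)$ using Lemma \ref{meanZeroWavelet} and the support of $\psi$, and secure convergence by the word-counting bound on Schottky group elements of bounded length. Your explicit case split on whether $x$ lies in $F_{i_0}$ or in some $F_k$ with $k\neq i_0$ is just a more carefully articulated version of what the paper handles via the indicator $\Omega(x\in F_{i_0})$.
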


\begin{proof}
Since the number of group elements in a Schottky group having bounded length is bounded by a function of the genus, as in \cite[Lem.\ 3.1]{brad_SchottkyDiffusion}, there exists $\alpha_Z>>0$ such that
\begin{align*}
\mathcal{Z}\psi_A(x)&=
\sum\limits_{i,j=1\atop i\neq j}^N
\sum\limits_{\gamma_i\in\Gamma_i'}
\sum\limits_{\gamma_j\in\Gamma_j'}
w(\gamma_i x,\gamma_j y)
\int_{F_j}(\psi_A(y)-\psi_A(x))\mu_{\tr}(y)\,\Omega(x\in F_i)
\\
&=\sum\limits_{j=0\atop j\neq i_0}^N
\sum\limits_{\gamma_i\in\Gamma_{i_0}'}
\sum\limits_{\gamma_j\in\Gamma_j'}
p^{-\alpha_Z(\ell(\gamma_i)+\ell(\gamma_j))}
w_{i_0,j}
\left(\int_{F_j}
(\psi_A(y)\mu(y)-\mu(F_j)\psi_A(x)\mu_{\tr}(y)\right)\cdot
\\
&\cdot\Omega(x\in F_{i_0})
\\
&\stackrel{\text{Lem.\ \ref{meanZeroWavelet}}}{=}
-\sum\limits_{j\neq i_0}
\sum\limits_{\gamma_i\in\Gamma_{i_0}'}
\sum\limits_{\gamma_j\in\Gamma_j'}
p^{-\alpha_Z(\ell(\gamma_i)+\ell(\gamma_j))}
w_{i_0,j}\mu(F_j)\,\psi_A(x)
\end{align*}
where the eigenvalue is as asserted, and does not depend on $A\subset\Omega_{i,\tr}$.
\end{proof}

\begin{thm}\label{ShimuraEigenbasis}
The space $L^2(\Omega_{\tr}^\omega,\mu_{\tr})^{\underline{\Gamma}}$ has an orthonormal basis consisting of ultrametric wavelets which are invariant under one of the Schottky groups $\Gamma_i'$ with $i=1,\dots,N$. These are eigenfunctions of $\mathcal{L}$ with eigenvalue
\[
\lambda^{(\alpha_i)}_A+\lambda_{i}
\]
where the wavelet is supported in the orbit of $A\subset\Omega_{i,\tr}^{\omega_i}$, and $\lambda_A$ is the corresponding eigenvalue w.r.t.\ operator $\Delta_{i,\alpha_i}$, and $\lambda_i$ the one w.r.t.\ to operator $\mathcal{Z}$. The operator $\mathcal{L}$ is self-adjoint, and all its eigenvalues are negative and have finite multiplicity.
\end{thm}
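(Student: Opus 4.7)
The plan is to reduce the proof to the Mumford-curve case of Theorem \ref{MumfordEV} and the eigenvalue computation for $\mathcal{Z}$ provided by Lemma \ref{ZunigaEV}, exploiting the orthogonal decomposition of the ambient Hilbert space. First I would invoke the co-product structure of the measure $\mu_{\tr}=\absolute{\omega_1}_{\tr}\sqcup\dots\sqcup\absolute{\omega_N}_{\tr}$ on the disjoint union $\Omega_{\tr}^\omega=\bigsqcup_{i=1}^N\Omega_{i,\tr}^{\omega_i}$ to obtain the orthogonal direct sum
\[
L^2(\Omega_{\tr}^\omega,\mu_{\tr})^{\underline{\Gamma}}=\bigoplus_{i=1}^N L^2(\Omega_{i,\tr}^{\omega_i},\absolute{\omega_i}_{\tr})^{\Gamma_i'}.
\]
Applying Theorem \ref{MumfordEV} to each summand yields an orthonormal basis of $\Gamma_i'$-invariant ultrametric wavelets $\psi_A^{\Gamma_i'}$, and extending each such wavelet by zero on the remaining components gives an orthonormal basis of the full space, establishing the first claim.

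Next, I would compute $\mathcal{L}\psi$ for a fixed basis element $\psi=\psi_A^{\Gamma_{i_0}'}$ supported in the $i_0$-th component. The summand $\Delta_{i,\alpha_i}^{\frac12}\psi$ vanishes for every $i\neq i_0$, since $\psi$ is identically zero on $\Omega_{i,\tr}^{\omega_i}$, while Theorem \ref{MumfordEV} identifies $\Delta_{i_0,\alpha_{i_0}}^{\frac12}\psi=\lambda_A^{(\alpha_{i_0})}\psi$. Lemma \ref{ZunigaEV} supplies $\mathcal{Z}\psi=\lambda_{i_0}\psi$ with $\lambda_{i_0}$ independent of $A$ and of $\alpha_{i_0}$. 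Inserting these three identities into the definition (\ref{ShimuraLaplacian}) of $\mathcal{L}$ produces the claimed eigenvalue $\lambda_A^{(\alpha_{i_0})}+\lambda_{i_0}$.

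For self-adjointness I would note that each $\Delta_{i,\alpha_i}$ is self-adjoint by the earlier lemma, so the same holds for $\Delta_{i,\alpha_i}^{\frac12}$ in the sense used in Theorem \ref{MumfordEV}; the operator $\mathcal{Z}$ is symmetric because the weights satisfy $w_{ij}=w_{ji}$, making the kernel $Z(x,y)$ symmetric. Absolute convergence of the double series defining $\mathcal{Z}$ on the dense subspace $\mathcal{D}(\Omega_{\tr}^\omega)^{\underline{\Gamma}}$ is guaranteed by the assumption $\alpha_Z\gg 0$ through an argument parallel to Lemma \ref{denselyDefined}, yielding a densely defined symmetric and hence self-adjoint operator $\mathcal{L}$. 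Negativity of the spectrum is then immediate from the explicit sign in Theorem \ref{MumfordEV} for $\lambda_A^{(\alpha_{i_0})}$ and from the explicit negative formula in Lemma \ref{ZunigaEV} for $\lambda_{i_0}$. Finite multiplicity follows because $i_0$ ranges only over $\{1,\dots,N\}$ and each $\lambda_A^{(\alpha_i)}$ has finite multiplicity for $\Delta_{i,\alpha_i}^{\frac12}$ by Theorem \ref{MumfordEV}, so any given combined eigenvalue is realised by only finitely many basis wavelets.

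The main obstacle is not conceptual but rather one of bookkeeping: verifying that extension-by-zero is compatible with both $\mathcal{Z}$ and each $\Delta_{i,\alpha_i}^{\frac12}$, and that the mean-zero property of ultrametric wavelets (Lemma \ref{meanZeroWavelet}) is exactly what collapses the cross-component action of $\mathcal{Z}$ on $\psi$ to the single scalar $\lambda_{i_0}$. Once this observation is in hand, the theorem is a direct assembly of the two preceding results, with self-adjointness and finite multiplicity tacked on by the symmetry of $Z$ and the pigeonhole argument just sketched.
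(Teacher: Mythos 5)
Your proposal is correct and takes essentially the same route as the paper, whose proof simply declares the eigenbasis and eigenvalues to be an immediate consequence of Theorem \ref{MumfordEV} and Lemma \ref{ZunigaEV} and reads off self-adjointness, negativity and finite multiplicity from them; you have merely filled in the component-wise bookkeeping. The one small caveat is your phrase ``densely defined symmetric and hence self-adjoint,'' which is not a valid implication for unbounded operators in general --- self-adjointness should instead be deduced from the orthonormal eigenbasis with real eigenvalues that you have already constructed, which is exactly how the paper argues.
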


\begin{proof}
The orthonormal eigenbasis property and the
eigenvalues are an immediate consequence of 
Theorem \ref{MumfordEV}
and Lemma \ref{ZunigaEV}.
This implies the self-adjointness of $\mathcal{L}$ and the negativity of the eigenvalues. The finite multiplicity of eigenvalues then follows from Theorem \ref{MumfordEV}.
\end{proof}

%%%%%%%%%%%%%%%%%%
\section{Heat Equations for Shimura Curves}

The goal here is to study the heat equation
\begin{align}\label{ShimuraHeatEquation}
\frac{\partial }{\partial t}u(x,t)-\mathcal{L}u(x,t)=0
\end{align}
with $\mathcal{L}$ the Laplacian integral operator (\ref{ShimuraLaplacian}). First, the corresponding Cauchy problem and heat kernel. This is completed by a study of associated boundary value problems.

%%%%%%%%%%%%%%
\subsection{Feller semigroup, Cauchy problem and heat kernel}

\begin{Lemma}
The linear operator $\mathcal{L}$ generates a Feller semigroup $\exp(t\mathcal{L})$ with $t\ge0$ on $C(\Omega_{\tr}',\mathds{R})^{\underline{\Gamma}}$.
\end{Lemma}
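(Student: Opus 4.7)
The plan is to apply the Hille--Yosida--Ray characterization of Feller generators, using the spectral structure already established in Theorem \ref{ShimuraEigenbasis}. Concretely, three conditions must be verified: dense domain, the positive maximum principle, and the dense-range condition for $\lambda I - \mathcal{L}$ for some (hence every) $\lambda > 0$. Once these are in place, the closure of $\mathcal{L}$ is the generator of a strongly continuous positive contraction semigroup on the target space of continuous invariant functions, which is precisely what a Feller semigroup means here.

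First, I would verify the dense-domain condition. The space $\mathcal{D}(\Omega_{\tr}^\omega)^{\underline{\Gamma}}$ of $\underline{\Gamma}$-invariant locally constant functions is dense in $C(\Omega_{\tr}',\mathds{R})^{\underline{\Gamma}}$ because, by Theorem \ref{countable}, each Galois orbit $\mathcal{C}(t)$ is the boundary of a locally finite rooted tree, and continuous functions on such a profinite boundary are uniformly approximable by cylinder step functions. Invariance under the Schottky groups $\Gamma_i'$ is preserved by this approximation, since a fundamental domain can be chosen consisting of finitely many orbits and the approximation can be carried out on the fundamental domain and extended by the group action.

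Second, the positive maximum principle. The kernel of $\mathcal{Z}$ is non-negative because $w_{ij}\ge 0$, and the operator $-\Delta_{i,\alpha_i}^{1/2}$ coincides with $\mathcal{H}_{\alpha_i}$ on the invariant subspace by Theorem \ref{MumfordEV}, which is a jump-kernel operator with strictly positive kernel $H_{\alpha_i}$. Thus $\mathcal{L}u(x) = \int K(x,y)\bigl(u(y) - u(x)\bigr)\,d\mu_{\tr}(y)$ with $K\ge 0$, and whenever $u\in\mathcal{D}(\Omega_{\tr}^\omega)^{\underline{\Gamma}}$ attains a non-negative maximum at $x_0$, the integrand is $\le 0$, giving $\mathcal{L}u(x_0) \le 0$. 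Third, the range condition follows from Theorem \ref{ShimuraEigenbasis}: there is an orthonormal $L^2$-eigenbasis of $\underline{\Gamma}$-invariant ultrametric wavelets with strictly negative eigenvalues $\mu_\psi < 0$, and for each such $\psi$ and any $\lambda>0$ one has $(\lambda I - \mathcal{L})\bigl(\psi/(\lambda - \mu_\psi)\bigr) = \psi$. Since the wavelets are themselves locally constant, their finite linear combinations already lie in $\mathcal{D}(\Omega_{\tr}^\omega)^{\underline{\Gamma}}$ and are dense, so the range of $\lambda I - \mathcal{L}$ is dense.

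The main obstacle is bridging the $L^2$-spectral theory of Theorem \ref{ShimuraEigenbasis} with the uniform topology required for the Feller property. One must argue that finite wavelet combinations are dense not only in $L^2$ but also uniformly dense in the relevant subspace of $C(\Omega_{\tr}',\mathds{R})^{\underline{\Gamma}}$, which is where the locally constant nature of the ultrametric wavelets from \cite{XK2005} is crucial, and where the tree approximation from the first step re-enters. Care must also be taken to distinguish functions vanishing at infinity from merely bounded continuous functions on the only locally compact space $\Omega_{\tr}'$ (Corollary \ref{Polish}), but since the fundamental domain $F'$ is compact, invariance reduces all questions to this compact piece, so the distinction does not cause substantive difficulty.
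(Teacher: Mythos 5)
Your treatment of the first two Hille--Yosida--Ray conditions (dense domain, positive maximum principle) matches the paper's proof in substance. The divergence, and the gap, is in the range condition. The paper does \emph{not} argue via the spectral decomposition: it solves the resolvent equation $(\eta I-\mathcal{L})u=h$ directly for an arbitrary invariant test function $h$, by truncating the kernel to operators $T_k$ with $\norm{T_k}<1$, inverting $I+T_k$, and showing that the resulting solutions $u_k$ form a Cauchy sequence in the uniform norm whose limit solves the resolvent equation. Your route instead produces preimages only of the eigenfunctions, via $(\lambda I-\mathcal{L})\bigl(\psi/(\lambda-\mu_\psi)\bigr)=\psi$, and then needs finite linear combinations of the invariant ultrametric wavelets to be dense in $C(\Omega_{\tr}',\mathds{R})^{\underline{\Gamma}}$ for the \emph{uniform} norm. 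That density fails: by Lemma \ref{meanZeroWavelet} every wavelet integrates to zero over its supporting disc, hence every finite wavelet combination integrates to zero over each Galois orbit inside the (finite-measure) fundamental domain, so the constant function $1$, which is invariant and continuous, is at uniform distance at least $1$ from the closed span of the wavelets. You correctly flag the $L^2$-to-uniform bridge as the main obstacle, but local constancy of the wavelets does not repair it; the obstruction is the mean-zero property, not regularity.

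The gap can be partially patched, since $\mathcal{L}1=0$ gives $(\lambda I-\mathcal{L})(1/\lambda)=1$, so constants do lie in the range; but even then you would need the span of constants and wavelets to exhaust the locally constant invariant functions on the disjoint union of the $N$ pieces and the countably many Galois orbits inside each fundamental domain, and indicators of individual orbits are neither obviously in that span nor eigenfunctions of $\mathcal{L}$ (which couples the pieces through $\mathcal{Z}$). Two further mismatches would also need attention: Theorem \ref{ShimuraEigenbasis} provides a basis of $L^2(\Omega_{\tr}^\omega,\mu_{\tr})^{\underline{\Gamma}}$ while the semigroup is to act on $C(\Omega_{\tr}',\mathds{R})^{\underline{\Gamma}}$, with $\Omega_{\tr}^\omega$ a priori a proper subspace of $\Omega_{\tr}'$; and the wavelets are complex-valued while the target space is real. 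The paper's truncation argument sidesteps all of this by never leaving the uniform-norm setting, which is why it is the safer route here.
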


\begin{proof}
The criteria for applying the Hille-Yosida-Ray Theorem are verified:

\smallskip
1. The domain of $\mathcal{L}$ is dense in $C(\Omega_{\tr}',\mathds{R})^{\underline{\Gamma}}$. This follows from Lemma \ref{denselyDefined} and $Z(x,\cdot)\in \mathcal{D}(\Omega_{\tr}')^{\underline{\Gamma}}$ for every $x\in\Omega_{\tr}'$.

\smallskip
2. The operator $\mathcal{L}$ satisfies the positive maximum principle. For this let $h\in\mathcal{D}(\Omega_{\tr}',\mathds{R})^{\underline{\Gamma}}$ and $x_0\in\Omega_{\tr}'$ such that $h(x_0)=\sup\limits_{x\in\Omega_{\tr}'}h(x)$. This exists, because $h$ is periodic and $F$ is compact. 
%Assume that $x_0\in\Omega_{i,\tr}$.
Then
\begin{align*}
\mathcal{L}h(x_0)&=\int_{\Omega_{\tr}'}\left(Z(x_0,y)+\sum\limits_{i=1}^N H_i(x_0,y)\right)(h(x_0)-h(x_0))\,\mu_{\tr}(y)\le0,
\end{align*}
where $H_i(x,y)$ is the kernel function of $-\Delta_{i,\alpha_i}^{\frac12}$. This implies the positive maximum principle.

\smallskip
3. The range $\Ran(\eta I-\mathcal{L})$ is dense in $C(\Omega_{\tr}',\mathds{R})^{\underline{\Gamma}}$. The proof is analogous to the one contained in the proof of \cite[Lem.\ 5.1]{brad_SchottkyDiffusion}. For the convenience of the reader, the adaptations to the situation here are given now. Let $h\in C(\Omega_{\tr}',\mathds{R})^{\underline{\Gamma}}$, and $\eta>0$. The task is to find a solution of the equation
\begin{align}\label{resolventEquation}
(\eta I-\mathcal{L})u=h
\end{align}
for some $\eta>0$ and $h$ in some dense subspace of $C(\Omega_{\tr}',\mathds{R})^{\underline{\Gamma}}$. The equation can formally be rewritten as
\begin{align}\label{formalRewrite}
u(z)-\frac{\int L(x,y)u(y)\mu_{\tr}(y)}{\eta+\deg(z)}=\frac{h(z)}{\eta+\deg(z)}\,,
\end{align}
where $L(x,y)$ is the kernel function of $\mathcal{L}$, and
\[
\deg(z)=\int_{\Omega_{\tr}'}L(z,y)\mu_{\tr}(y)
\]
which, of course, does not converge. For this reason, take the operator
\[
T_ku(z)=\frac{\int_{\Omega_{\tr,z}^{(k)}} L(z,y)u(y)\mu_{\tr}(y)}{\eta+\deg_k(z)}
\]
with
\[
\Omega^{(k)}_{\tr,z}=\Omega_{1,\tr}\sqcup\cdots
\sqcup\Omega_{i,z}^{(k)}\sqcup\cdots\sqcup\Omega_{N,\tr}\,,
\]
where 
\[
\Omega_{i,z}^{(k)}=\bigsqcup\limits_{\gamma\in\Gamma_i'}\gamma F_{i,z}^{(k)}
\]
and
\[
F_{i,z}^{(k)}=F_{i}\setminus B_k(z)
\]
where w.l.o.g.\ it is assumed that $z\in F_{i}$, and such that
\[
B_0(z)\supset B_1(z)\supset B_2(z)\supset\dots
\]
is the maximal chain of discs contained in $F_i$ and containing $z$. %Further, it is assumed that $k>>0$.
Let 
\[
\deg_k(z)=\int_{\Omega_{\tr,z}^{(k)}}L(z,y)\mu_{\tr}(y)
\]
which does converge, and
\[
\absolute{T_ku(z)}\le\frac{\deg_k(z)}{\eta+\deg_k(z)}\norm{u}_\infty\,,
\]
where $\norm{u}_\infty$ is finite, because $u$ is invariant, and $F$ is compact. Hence,
\[
\norm{T_k}\le\frac{1}{\eta/\deg_k(z)+1}<1
\]
for any $\eta>0$ and $k\in\mathds{N}$. It follows that $I+T_k$
has a bounded inverse as an operator on $C(\Omega_{\tr}',\mathds{R})^{\underline{\Gamma}}$. This means that its range is dense for $k\in\mathds{N}$.

\smallskip
Now, let $h\in\mathcal{D}(\Omega_{\tr}',\mathds{R})^{\underline{\Gamma}}$, and let $u_k,u_\ell\in C(\Omega_{\tr}',\mathds{R})^{\underline{\Gamma}}$ be solutions of 
\[
(I+T_k)u_k=\frac{h}{\eta+\deg_k},\quad(I+T_\ell)u_\ell=\frac{h}{\eta+\deg_\ell}
\]
for $k,\ell\in\mathds{N}$. Then
\begin{align}\label{CauchySeq}
u_k-u_\ell=\frac{(I+T_\ell)(\eta+\deg_\ell)-(I+T_k)(\eta+\deg_k)}{(I+T_k)(I+T_\ell)(\eta+\deg_k)(\eta+\deg_\ell)}\,h
\end{align}
shows that $u_k$ is a Cauchy sequence w.r.t.\ $\norm{\cdot}_\infty$. The reason is that, firstly,
\[
\norm{T_k}=\sup\limits_{z\in F}\frac{\deg_k(z)}{\eta+\deg_k(z)}
\]
holds true, and this is a strictly increasing sequence converging to $1$. This implies that $T_k$ converges to a bounded operator $T$ on $C(\Omega_{\tr}',\mathds{R})^{\underline{\Gamma}}$. Secondly, the numerator of the right hand side of (\ref{CauchySeq}) is
\[
\eta(T_\ell-T_k)+(\deg_\ell-\deg_k)+(T_\ell\deg_\ell-T_k\deg_k)
\]
whose first and third summands become arbitrarily small in norm as $\ell\ge k\to\infty$. The third term is 
\[
T_\ell\deg_\ell-T_k\deg_k=(T_\ell\deg_\ell-T_k\deg_\ell)+(T_k\deg_\ell-T_k\deg_k)
\]
for which both summands also become arbitrarily small in norm as $\ell\ge k\to\infty$.
Hence, $u_k$ converges to some $u\in C(\Omega_{\tr}',\mathds{R})^{\underline{\Gamma}}$ which can be seen to be a solution of (\ref{resolventEquation}) as follows:
\[
\lim\limits_{k\to\infty}(\eta+\deg_k)T_k=(\eta+\deg) T
\]
with convergence in norm. The limit operator coincides with the unbounded operator
\[
u\mapsto Au=\int_{\Omega_{\tr}'} L(\cdot,y)u(y)\mu_{\tr}(y)
\]
which shows that the operator 
\[
T=\frac{A}{\eta+\deg}
\]
appearing in (\ref{formalRewrite}) is bounded. Now, $u_k$ is a solution of
\[
(\eta-L_k)u_k=h
\]
with
\[
L_k=(\eta+\deg_k)T_k-\deg_k
\]
which converges to $\mathcal{L}$ for $k\to\infty$. As $u_k\to u$, it follows that 
\[
(\eta I+\ell)u=(\eta I-L_k)u+(L_k-\mathcal{L})u,
\]
where
\[
(\eta I-L_k)u=(\eta I-L_k)u_k+L_k(u_k-u)=h+L_k(u_k-u)\to h
\]
and
\[
(L_k-\mathcal{L})u\to0
\]
for $k\to\infty$. Hence, $u$ is a solution of (\ref{resolventEquation}). This shows that $\Ran(\eta I-\mathcal{L})$ contains the invariant real-valued test functions, and is thus dense in $C(\Omega_{\tr}',\mathds{R})^{\underline{\Gamma}}$.

\smallskip
The assertion now follows with Hille-Yosida-Ray.
\end{proof}

\begin{thm}\label{heatKernelDistribution}
There exists a probability measure $p_t(x,\cdot)$ with $t\ge0$, $x\in\Omega_{\tr}^\omega$ on the Borel $\sigma$-algebra of $\Omega_{\tr}^\omega$ such that the Cauchy problem for the heat equation (\ref{ShimuraHeatEquation}) with initial condition $u(x,0)=u_0(x)\in C(\Omega_{\tr}^\omega,\mathds{R})^{\underline{\Gamma}}$ has a unique solution in $C^1((0,\infty),\Omega_{\tr}^\omega)^{\underline{\Gamma}}$ of the form 
\[
u(x,t)=\int_{\Omega_{\tr}^\omega} u_0(y)p_t(x,\mu_{\tr}(y))
\]
Additionally, $p_t(x,\cdot)$ is the transition function of a strong Markov process on $\Omega_{\tr}^\omega/\underline{\Gamma}$, where the action of $\underline{\Gamma}$ is by $\Gamma_i'$ on $\Omega_{i,\tr}^{\omega_i}$ for each $i=1,\dots,N$, and whose paths are c\`adl\`g.  
\end{thm}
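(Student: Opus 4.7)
The plan is to combine the Feller semigroup $\{T_t\}_{t\ge0}=\{\exp(t\mathcal{L})\}_{t\ge 0}$ from the preceding lemma with the spectral decomposition of Theorem \ref{ShimuraEigenbasis}. First, by the Riesz--Markov representation theorem, for each $t\ge 0$ and each $x\in\Omega_{\tr}^\omega$, the positive bounded linear functional $u_0\mapsto (T_t u_0)(x)$ on $C(\Omega_{\tr}',\mathds{R})^{\underline{\Gamma}}$ defines a Radon measure $p_t(x,\cdot)$ on the Borel $\sigma$-algebra of $\Omega_{\tr}^\omega$. I would then verify that $\mathcal{L}$ annihilates constants, which is immediate because both $\mathcal{Z}$ and each $-\Delta_{i,\alpha_i}^{\frac12}$ act through an integrand of the form $(u(y)-u(x))$; consequently $T_t\mathbf{1}=\mathbf{1}$, so $p_t(x,\Omega_{\tr}^\omega)=1$ and $p_t(x,\cdot)$ is a genuine probability measure.

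For the Cauchy problem I would set
\[
u(x,t)=(T_t u_0)(x)=\int_{\Omega_{\tr}^\omega} u_0(y)\,p_t(x,\mu_{\tr}(y)).
\]
Expanding an invariant test function $u_0$ in the ultrametric wavelet basis of Theorem \ref{ShimuraEigenbasis} as $u_0=\sum_{A} c_A\,\psi_A^{\Gamma_i'}$ gives
\[
u(x,t)=\sum_{A} c_A\,e^{t(\lambda_A^{(\alpha_i)}+\lambda_i)}\,\psi_A^{\Gamma_i'}(x),
\]
and since all eigenvalues are strictly negative and of finite multiplicity, termwise differentiation in $t$ is uniformly justified, producing $u\in C^1((0,\infty),\Omega_{\tr}^\omega)^{\underline{\Gamma}}$ with $\partial_t u=\mathcal{L}u$. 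The general continuous invariant initial datum is handled by density of $\mathcal{D}(\Omega_{\tr}^\omega)^{\underline{\Gamma}}$ (Lemma \ref{denselyDefined}) together with the strong continuity of the Feller semigroup. Uniqueness follows from an energy argument: if $v$ is a second solution, then $w=u-v$ satisfies $\partial_t w=\mathcal{L}w$ with $w(\cdot,0)=0$, and self-adjointness plus negative definiteness of $\mathcal{L}$ yield $\tfrac{d}{dt}\|w\|_{L^2}^2=2\langle\mathcal{L}w,w\rangle\le 0$, forcing $w\equiv 0$.

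The strong Markov property and càdlàg sample paths are then extracted from the standard theory of Feller semigroups on locally compact Polish spaces: by Corollary \ref{Polish}, $\Omega_{\tr}^\omega$ inherits the locally compact Polish structure from $\Omega_{\tr}\subset\mathds{C}_p$, and the classical construction (cf.\ Ethier--Kurtz) associates to any such Feller semigroup a strong Markov process with transition function $p_t(x,\cdot)$ and sample paths in the Skorokhod space of càdlàg functions. $\underline{\Gamma}$-invariance of the kernel $L(x,y)$ of $\mathcal{L}$ ensures that the process descends consistently to $\Omega_{\tr}^\omega/\underline{\Gamma}$.

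The main obstacle I anticipate is the $C^1$-regularity statement in the correct function space: Theorem \ref{ShimuraEigenbasis} provides an orthonormal basis in the $L^2$-category, whereas the Cauchy problem is posed in the continuous category. I would resolve this by establishing uniform convergence of the spectral series for test-function initial data (where only finitely many wavelet coefficients dominate, the tail being exponentially suppressed by $e^{t(\lambda_A^{(\alpha_i)}+\lambda_i)}$) and then transferring to general continuous invariant initial data via density and the Feller property, which guarantees that uniform approximations of $u_0$ translate into uniform approximations of $T_t u_0$.
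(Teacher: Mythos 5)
Your proposal is correct and follows essentially the same route as the paper, which at this point simply invokes the proof of \cite[Thm.\ 5.2]{brad_SchottkyDiffusion}: that argument likewise passes from the Feller semigroup $\exp(t\mathcal{L})$ supplied by the preceding lemma to transition probabilities via Riesz--Markov (conservativeness coming from $\mathcal{L}\mathbf{1}=0$), solves the Cauchy problem by the spectral expansion of Theorem \ref{ShimuraEigenbasis}, and obtains the strong Markov property and c\`adl\`ag paths from the standard theory of Feller processes on locally compact Polish spaces (Corollary \ref{Polish}). Your reconstruction, including the identification of the $L^2$-versus-$C^0$ regularity issue and its resolution through invariance and compactness of the fundamental domain, matches the intended argument.
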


\begin{proof}
The proof is the same as for \cite[Thm.\ 5.2]{brad_SchottkyDiffusion}.
\end{proof}

To say that a diffusion process has a heat kernel function means that there exists a distribution function $p(t,x,y)$ for the Markov distribution $p_t(x,\mu_{\tr}(y))$ from Theorem \ref{heatKernelDistribution} in the sense that
\[
p_t(x,\mu_{\tr}(A))=\int_Ap(t,x,y)\,\mu_{tr}(y)
\]
for $t>0$, $x\in\Omega_{\tr}^\omega$, and $A\subset\Omega_{\tr}^\omega$ a measurable set.
It may happen that $p_t(x,x)$ is not defined for all $t\ge0$. This is the case in the present setting.

\begin{cor}\label{heatKernel}
The heat equation 
(\ref{ShimuraHeatEquation})
has a well-defined heat kernel function $p(t,x,y)$ for $t\ge0$ and $x\neq y$. In the case $x=y$ it is only defined for $t>>0$.
\end{cor}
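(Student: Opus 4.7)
The plan is to exploit the spectral decomposition from Theorem \ref{ShimuraEigenbasis} and realise the heat kernel as the Mercer-type series
\[
p(t,x,y)=\sum_{\psi}e^{t\lambda_{\psi}}\,\psi(x)\overline{\psi(y)},
\]
indexed over the orthonormal eigenbasis of $\underline{\Gamma}$-invariant ultrametric wavelets $\psi$, with eigenvalues $\lambda_{\psi}=\lambda_A^{(\alpha_i)}+\lambda_i$. This is the natural candidate kernel: integrated against an initial datum $u_0$ it reproduces the semigroup solution from Theorem \ref{heatKernelDistribution} by the usual Parseval argument, so the task reduces to establishing pointwise convergence of this series and compatibility with the transition measure $p_t(x,\cdot)$.

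For $x\neq y$ the starting observation is that a $\underline{\Gamma}$-invariant Xi-Khrennikov wavelet supported in the orbit of $A\subset F_{\tr}^{\omega}$ gives a nonzero product $\psi(x)\overline{\psi(y)}$ only when some $\Gamma$-translate $\gamma A$ of the supporting disc simultaneously meets the orbits of $x$ and $y$. Since $F_{\tr}^{\omega}$ has bounded diameter and the ultrametric forces any such disc to have radius at least $|x-y|$, only finitely many wavelet scales can contribute within each Mumford component; combined with the length decay $p^{-\alpha_i\ell(\gamma)}$ controlling the $\Gamma$-translate sum, and the local finiteness of the tree structure established in the proof of Theorem \ref{countable}, the series converges absolutely at $t=0$. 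The factor $e^{t\lambda_{\psi}}\leq1$ (all eigenvalues being negative by Theorem \ref{ShimuraEigenbasis}) only improves convergence for $t\geq0$.

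For $x=y$ the diagonal series $\sum_{\psi}e^{t\lambda_{\psi}}|\psi(x)|^2$ must be examined directly. Ultrametricity guarantees that at each scale exactly one disc $A$ contains $x$, and only boundedly many wavelets are supported on $A$; the Xi-Khrennikov normalisation then gives $|\psi(x)|^2\asymp\mu(A)^{-1}$. Extracting the dominant growth of $-\lambda_A^{(\alpha_i)}$ as $\mu(A)\to0$ from the explicit formula in Theorem \ref{MumfordEV} produces an exponential factor $e^{-t\,\varphi_i(\mu(A))}$ with $\varphi_i(s)\to\infty$ as $s\to0$. For $t$ sufficiently large this decay overpowers the $\mu(A)^{-1}$ growth along the nested chain of discs shrinking to $x$ and the series converges; for small $t$ the $\mu(A)^{-1}$ factor wins and divergence ensues, producing precisely the $t\gg0$ dichotomy claimed.

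The main obstacle I expect is making the asymptotic growth of $-\lambda_A^{(\alpha_i)}$ in $\mu(A)^{-1}$ genuinely quantitative and uniform across the $N$ Mumford components with their distinct parameters $\alpha_i$, and absorbing the bounded shift from the $\lambda_i$-part coming from $\mathcal{Z}$ without destroying the finiteness-at-each-scale argument. Once absolute convergence is secured off the diagonal, identifying the series with the Radon-Nikodym density of $p_t(x,\cdot)$ against $\mu_{\tr}$ is a routine application of Fubini together with the spectral theorem inside the Feller semigroup framework already provided by Theorem \ref{heatKernelDistribution}.
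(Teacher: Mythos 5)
Your proposal follows essentially the same route as the paper's proof: the same eigenfunction-expansion ansatz $p(t,x,y)=\sum_{\psi}e^{\lambda_\psi t}\psi(x)\overline{\psi(y)}$, the same observation that off the diagonal only the finitely many wavelets whose supporting disc contains the join of $x$ and $y$ inside the (compact) fundamental domain contribute, and the same competition on the diagonal between the growth of $\mu(\supp\psi)^{-1}$ and the decay $e^{\lambda_\psi t}$ yielding convergence only for $t\gg0$. Your treatment of the diagonal case is in fact somewhat more explicit than the paper's (which simply invokes finite multiplicities and a spectral gap), but the argument is the same in substance.
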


\begin{proof}
Due to Theorems \ref{ShimuraEigenbasis} and \ref{heatKernelDistribution}, it can be said that
if it exists, then the heat kernel has the following decomposition:
\begin{align}\label{ansatz}
p(t,x,y)=\sum\limits_{\psi}e^{\lambda_\psi t}\psi(x)\overline{\psi(y)}
\end{align}
where $\psi$ runs over an orthonormal basis of $L^2(\Omega_{\tr}^\omega,\mu_{\tr})^{\underline{\Gamma}}$,
and the question is whether this expression always converges. First, assume that $x\neq y$. Then, since the eigenfunctions are wavelets supported in one of the constituent $\Omega_{i,\tr}^{\omega_i}$, it follows that if $x$ and $y$ do not belong to the same $\Omega_{i,\tr}^{\omega_i}$, then the sum (\ref{ansatz}) vanishes. So, assume now that $x,y\in\Omega_{i,\tr}^{\omega_i}$. By invariance, one may assume that $x,y$ belong to $F_{i,\tr}^{\omega_i}$. This means that the sum (\ref{ansatz}) contains only those wavelets whose support intersection with the good fundamental domain $F_i$ contains the ultrametric disc belonging to the join of $x$ and $y$. These are only finitely many because of the compactness of $F_i$. Hence, the sum converges for all $t\ge0$ and $x\neq y$.

\smallskip
Now, assume that $x=y$. Then (\ref{ansatz}) becomes
\[
p(t,x,x)=\sum\limits_{\psi}e^{\lambda_\psi t}\mu_{\tr}(\supp(\psi))^{-1}
\]
and converges if and only if  $t>>0$ because of the finite dimensionality of the eigenspaces of $\mathcal{L}$, and because the spectrum of $-\mathcal{L}$ has a smallest positive element.
\end{proof}

%%%%%%%%%%
\subsection{Boundary Value Problems}

In order to say something about boundary value problems involving the heat equation (\ref{ShimuraHeatEquation}), it is helpful to use separation of variables:
\[
u(x,t)=V(x)W(t)
\]
and obtain
\[
\frac{\dot{W}(t)}{W(t)}
=\frac{\mathcal{L}U(x)}{U(x)}
\]
which leads to the system
\begin{align*}
\mathcal{L}V(x)&=\lambda V(x)
\\
\dot{W}(t)&=\lambda W(t)
\end{align*}
giving rise to the solution
\begin{align}\label{SV_solution}
u(x,t)=e^{\lambda_\psi t}\psi(x)
\end{align}
for a $\Gamma_i'$-invariant ultrametric wavelet supported in some $\Omega_{i,\tr}^\omega$ for some $i=1,\dots,N$. Since here the focus is on invariant Dirichlet and von Neumann boundary conditions, it suffices to generalise the corresponding $p$-adic definitions found in \cite[\S 2.5]{NonAutonomous}.
\newline

The \emph{vertex boundary} of $S$ is
\[
\delta S=\mathset{x\in F_{\tr}^\omega\setminus S\mid \exists\, y\in S\colon L(x,y)\neq0}
\]
and the \emph{edge boundary} of $S$ is
\[
\partial S=\mathset{
(x,y)\in \left(F_{\tr}^{\omega}\right)^2\mid
x\in S\;\wedge\;y\notin S}
\]
for $S\subset F_{\tr}^\omega$ compact open. 
Its invariant versions are
\[
\underline{\Gamma}\delta S
=\bigsqcup\limits_{i=1}^N\bigsqcup\limits_{\gamma\in\Gamma_i'}\gamma(\delta S\cap F_{i,\tr}^{\omega_i})
\]
and
\[
\underline{\Gamma}\partial S:=\mathset{(x,y)\in \left(\Omega_{\tr}^\omega\right)^2\mid
x\in\underline{\Gamma} S\;\wedge\; y\notin \underline{\Gamma}S}
\]
with an obvious extension of the interpretation of an orbit of $S$ under $\underline{\Gamma}$.
For an invariant function $f\colon \Omega_{\tr}^\omega\to\mathds{C}$ to satisfy the \emph{invariant Dirichlet boundary condition} means
\[
\forall x\in \underline{\Gamma}\delta S\colon f(x)=0\,,
\]
and for it to satisfy the \emph{invariant von Neumann boundary condition} means
\[
\forall x\in\underline{\Gamma}\delta S\colon
\int_{\mathset{(x,y)\in\underline{\Gamma}\partial S}} L(x,y)(f(y)-f(x))\mu_{\tr}(y)=0\,,
\]
where $L(x,y)$ is the kernel function of the integral operator $\mathcal{L}$.

\begin{thm}\label{bvp_Shimura}
The boundary value problem
\begin{align*}
\frac{\partial}{\partial t}
u(x,t)&-\mathcal{L}u(x,t)=0
\\
u(x,0)&=u_0(x)\in\mathcal{D}(\Omega_{\tr}^\omega)^{\underline{\Gamma}}
\end{align*}
with $u(x,t)$ satisfying either the invariant Dirichlet or the invariant von Neumann boundary condition for $S\subset F_{\tr}^\omega$ compact open and for all $t\ge0$ has a solution if and only if $u_0(x)$ is supported in $\underline{\Gamma}S$.  
\end{thm}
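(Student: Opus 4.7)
The plan is to combine separation of variables with the spectral decomposition established in Theorem~\ref{ShimuraEigenbasis}. Every candidate solution of~\eqref{ShimuraHeatEquation} admits an expansion
\[
u(x,t) = \sum_{A} c_A\, e^{\lambda_A^{\mathcal{L}} t}\, \psi_A^{\Gamma_i'}(x)
\]
in the orthonormal eigenbasis of invariant ultrametric wavelets, with coefficients $c_A = \langle u_0, \psi_A^{\Gamma_i'}\rangle$ and eigenvalues $\lambda_A^{\mathcal{L}} = \lambda_A^{(\alpha_i)} + \lambda_i$ as in Theorems~\ref{MumfordEV} and~\ref{ShimuraEigenbasis}. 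The question then becomes for which initial data $u_0 \in \mathcal{D}(\Omega_{\tr}^\omega)^{\underline{\Gamma}}$ one of the boundary conditions on $S$ is preserved for all $t \ge 0$.

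For sufficiency, I would argue that if $\supp u_0 \subset \underline{\Gamma}S$, then since $S \subset F_{\tr}^\omega$ is compact open in an ultrametric space, hence a finite disjoint union of ultrametric discs, $u_0$ lies in the closed subspace spanned by exactly those wavelets $\psi_A^{\Gamma_i'}$ whose supporting disc $A$ is contained in some $\underline{\Gamma}$-translate of a component of $S$. Every such wavelet vanishes identically on $\underline{\Gamma}\delta S \subset \Omega_{\tr}^\omega \setminus \underline{\Gamma}S$, so the invariant Dirichlet condition is automatic. For the invariant von Neumann condition one uses the eigenfunction identity $\mathcal{L}\psi_A^{\Gamma_i'}(x) = \lambda_A^{\mathcal{L}}\, \psi_A^{\Gamma_i'}(x)$ at a point $x \in \underline{\Gamma}\delta S$: the right-hand side vanishes because $\psi_A^{\Gamma_i'}(x) = 0$, and the left-hand side rearranges into the desired flux identity. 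The series $u(x,t) = \sum c_A e^{\lambda_A^{\mathcal{L}} t} \psi_A^{\Gamma_i'}(x)$ then inherits both properties term by term and coincides with the Feller semigroup solution $e^{t\mathcal{L}}u_0$ from Theorem~\ref{heatKernelDistribution}.

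For necessity, suppose a solution $u(x,t)$ satisfies the initial condition and one of the boundary conditions for all $t \ge 0$. Expanding $u$ in the eigenbasis and using the linear independence of $\{e^{\lambda t}\}_\lambda$ as functions of $t$ across distinct eigenvalues, the boundary constraint must hold separately on each eigenspace. The explicit construction of~\cite{XK2005} exhibits $\psi_A^{\Gamma_i'}$ as locally constant on the child subdiscs of $A$ with values taken from a nontrivial character of the cyclic group $C_{m(A)}$. From this I would deduce that any wavelet whose disc $A$ is not contained in $\underline{\Gamma}S$ assumes a nonzero value on at least one point of $\underline{\Gamma}\delta S$, so forcing its coefficient $c_A$ to vanish in the Dirichlet case; the von Neumann case reduces to the same vanishing, since the boundary flux integral detects exactly these character values. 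Hence $u_0 = \sum c_A \psi_A^{\Gamma_i'}$ is supported in $\underline{\Gamma}S$.

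The principal technical obstacle will be the configuration in which the disc $A$ strictly contains a component of a $\underline{\Gamma}$-translate of $S$, so that $\psi_A^{\Gamma_i'}$ straddles $\underline{\Gamma}S$ and its complement. Ruling out coincidental cancellation among several such wavelets sharing an eigenvalue will require combining the finite multiplicity statement of Theorem~\ref{ShimuraEigenbasis} with the orthogonality of distinct cyclic-group characters on the tree associated to $A$, so that each offending coefficient is forced to vanish individually.
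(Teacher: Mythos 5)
Your overall strategy---separation of variables plus the wavelet eigenbasis of Theorem \ref{ShimuraEigenbasis}---is the same as the paper's, and your necessity argument for the Dirichlet case is essentially sound (indeed, since the kernel $L(x,y)$ is nonzero for all $x\neq y$ in the same component, $\underline{\Gamma}\delta S$ is essentially all of $\underline{\Gamma}(F_{\tr}^\omega\setminus S)$, so vanishing of each eigenspace projection on the boundary already forces it to be supported in $\underline{\Gamma}S$; you do not even need to kill each coefficient $c_A$ individually, which defuses your worry about coincidental cancellation within an eigenspace).

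The genuine gap is in your sufficiency step. You assert that a $u_0$ supported in $\underline{\Gamma}S$ ``lies in the closed subspace spanned by exactly those wavelets whose supporting disc $A$ is contained in some $\underline{\Gamma}$-translate of a component of $S$.'' This contradicts Lemma \ref{meanZeroWavelet}: every such wavelet has zero mean over its support, hence every function in that closed span has zero integral over each component disc of $\underline{\Gamma}S$, whereas a general invariant test function supported in $\underline{\Gamma}S$ does not. So the wavelet expansion of a typical admissible $u_0$ necessarily contains wavelets whose disc strictly contains a component of $S$ --- precisely the ``straddling'' configuration you defer to the final paragraph as a technical obstacle. That configuration is therefore not a corner case but the generic one, and your argument does not resolve it. The paper takes a different route at exactly this point: for the Dirichlet direction it appeals to the support-propagation statement extracted from Corollary \ref{heatKernel} (the heat kernel expansion only involves wavelets whose support contains the join of $x$ and $y$), and for the von Neumann direction it computes the flux integral directly, using that $L(x,\cdot)$ is constant on any disc not containing $x$ together with Lemma \ref{meanZeroWavelet}, so that straddling wavelets contribute zero flux. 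To repair your proposal you would need to either import that support-propagation argument or show explicitly that the straddling wavelets' contributions cancel on $\underline{\Gamma}\delta S$ for all $t\ge 0$, which is exactly the part currently missing.
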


\begin{proof}
\emph{Invariant Dirichlet boundary condition.}
If $u_0$ is supported in $\underline{\Gamma}S$, then from Corollary \ref{heatKernel} it follows that the solution $u(x,t)$ of the Cauchy problem remains supported in $\underline{\Gamma}S$ for all $t\ge0$. Hence, the invariant Dirichlet boundary is satisfied. Conversely, from (\ref{SV_solution}), it follows that imposing the invariant Dirichlet boundary condition restricts the eigenfunctions $\psi$ to be supported in $\underline{\Gamma} S$ at all times $t\ge0$. Hence, the initial condition must be supported in $\underline{\Gamma}S$.

\smallskip
\emph{Invariant von Neumann boundary condition.} This condition means
\begin{align*}
0&=\int_{\underline{\Gamma}S} L(x,y)(f(y)-f(x))\mu_{\tr}(y)
\\
&=\int_{\underline{\Gamma}S}L(x,y)f(y)\mu_{\tr}(y)-\int_SL(x,y)\mu_{\tr}(y)\,f(x)
\end{align*}
for $x\notin S$. These two  integrals converge, because $y$ never gets too close to $x$, and the defining properties of $L(x,y)$. If the first integral vanishes, then due to the vanishing of the second integral, it follows that $f$ is supported in $\underline{\Gamma}S$. In this case, (\ref{SV_solution}) says that the solution $u(x,t)$ is confined to $\underline{\Gamma}S$ at all times $t\ge0$. If, on the other hand, the second interval does not vanish for $x\notin S$, then the expansion of $f$ w.r.t.\ the ultrametric wavelet basis also contains wavelets supported outside of $S$. But for such a wavelet $\psi$, the first integral vanishes, because  of Lemma \ref{meanZeroWavelet}, since $L(x,y)$ for $y\in S$ only depends on the distance to the support of $\psi$ intersected with $F_{\tr}^\omega$. It follows that 
the possibility of a solution having support outside $\underline{\Gamma}S$ is ruled out.
This proves the assertion also in this case.
\end{proof}

%%%%%%%%%%%%%%%%%%%
\section*{Acknowledgements}

%Yassine El Maazouz, 
Paulina Halwas,
Frank Herrlich, Stefan K\"uhnlein, 
\'Angel Mor\'an Ledezma,  Leon Nitsche, David Weisbart and Wilson Z\'u\~niga-Galindo are warmly thanked for fruitful discussions over an extended course of time.
This work is partially supported by the Deutsche Forschungsgemeinschaft under project number 469999674.

%%%%%%%%%%%%%%%%
\bibliographystyle{plain}
\bibliography{biblio}

\begin{thebibliography}{10}

\bibitem{APZ1998}
V.~Alexandru, N.~Popescu, and A.~Zaharescu.
\newblock On the closed subfields of $\mathds{C}_p$.
\newblock {\em Journal of Number Theory}, 68:131--150, 1998.

\bibitem{Benedetto2001}
R.L. Benedetto.
\newblock Hyperbolic maps in $p$-adic dynamics.
\newblock {\em Ergod. Th. \& Dynam. Sys.}, 21:1--11, 2001.

\bibitem{BC1991}
J.-F. Boutot and H.~Carayol.
\newblock Uniformisation $p$-adique des courbes de {Shimura}: les th\'eor\`emes
  de \v{C}erednik et {Drinfeld}.
\newblock In {\em Courbes modulaires et Courbes de Shimura}, Ast\'erisque
  196-197. Soci\'et\'e Math\'ematique de France, 1991.

\bibitem{brad_heatMumf}
P.E. Bradley.
\newblock Heat equations and wavelets on {Mumford} curves and their finite
  quotients.
\newblock {\em Journal of Fourier Analysis and Applications}, 29:62, 2023.

\bibitem{brad_thetaDiffusionTateCurve}
P.E. Bradley.
\newblock Theta-induced diffusion on {Tate} elliptic curves over
  non-archimedean local fields.
\newblock arXiv:2312.03570 [math.NT], 2023.

\bibitem{Brad_HearingGenusMumf}
P.E. Bradley.
\newblock Heat equations and hearing the genus on $p$-adic {Mumford} curves via
  automorphic forms.
\newblock arXiv:2402.02869, 2024.

\bibitem{LocalUltrametricity}
P.E. Bradley.
\newblock On the local ultrametricity of finite metric data.
\newblock arXiv: [cs.IT], 2024.

\bibitem{brad_SchottkyDiffusion}
P.E. Bradley.
\newblock Schottky-invariant $p$-adic diffusion operators.
\newblock {\em Journal of Fourier Analysis and Applications}, accepted.

\bibitem{BL-topoIndex_p}
P.E. Bradley and \'A.M. Ledezma.
\newblock Approximating diffusion on finite multi-topology systems using
  ultrametrics.
\newblock arXiv:2411.00806 [cs.DM], 2024.

\bibitem{NonAutonomous}
P.E. Bradley and \'A.M. Ledezma.
\newblock A non-autonomous $p$-adic diffusion equation on time changing graphs.
\newblock {\em Reports on Mathematical Physics}, in press.

\bibitem{BL_shapes_p}
P.E. Bradley and \'A. {Mor\'an Ledezma}.
\newblock Hearing shapes with $p$-adic {Laplacians}.
\newblock {\em J. Math. Phys.}, 64:113502, 2023.

\bibitem{DT2007}
S.~Dasgupta and J.~Teitelbaum.
\newblock The $p$-adic upper half plane.
\newblock In {\em $p$-adic Geometry: Lectures from the 2007 {Arizona Winter
  School}}, volume~45 of {\em University Lecture Series}. AMS, 2008.
\newblock 203 pp.

\bibitem{Drinfeld1974}
V.G. Drinfel'd.
\newblock Elliptic modules.
\newblock {\em Mat. Sb. (N.S.)}, 94(136):594–627, 1974.
\newblock In Russian; translated in Math. USSR Sbornik 23:4 (1974), 561–592.

\bibitem{FP2004}
J.~Fresnel and M.~{van der Put}.
\newblock {\em Rigid Analytic Geometry and its Applications}, volume 218 of
  {\em Progress in Mathematics}.
\newblock Birkh\"auser, Boston, Mass., 2004.

\bibitem{GvP1980}
L.~Gerritzen and M.~{van der Put}.
\newblock {\em Schottky Groups and Mumford Curves}.
\newblock Lecture Notes in Mathematics, vol. 817. Springer, Heidelberg, New
  York, 1980.

\bibitem{Gouvea2020}
F.Q. Gouvêa.
\newblock {\em $p$-adic Numbers. An Introduction}.
\newblock Universitext. Springer, Cham, third edition, 2020.

\bibitem{Haran1990}
S.~Haran.
\newblock Riesz potentials and explicit sums in arithmetic.
\newblock {\em Invent. Math.}, 101:697--703, 1990.

\bibitem{Haran2008}
S.M.J. Haran.
\newblock {\em Arithmetical Investigations. Representation Theory, Orthogonal
  Polynomials, and Quantum Interpolations}.
\newblock Lecture Notes in Mathematics 1941. Springer, Berlin-Heidelberg, 2008.

\bibitem{HMSS2018}
M.~Heydeman, M.~Marcolli, I.A. Saberi, and B.~Stoica.
\newblock Tensor networks, $p$-adic fields, and algebraic curves: arithmetic
  and the {AdS3/CFT2} correspondence.
\newblock {\em Adv. Theor. Math. Phys.}, 22(1):93--176, 2018.

\bibitem{HH2021}
H.~Hua and L.~Hovestadt.
\newblock $p$-adic numbers encode complex networks.
\newblock {\em Sci Rep}, 11:17, 2021.

\bibitem{XK2005}
A.Yu. Khrennikov and S.V. Kozyrev.
\newblock Wavelets on ultrametric spaces.
\newblock {\em Appl. Comput. Harmon. Anal.}, 19:61--76, 2005.

\bibitem{Kozyrev2004}
S.V. Kozyrev.
\newblock $p$-{Adic} pseudodifferential operators and $p$-adic wavelets.
\newblock {\em Theoretical and Mathematical Physics}, 138(3):322--332, 2004.

\bibitem{Kuennemann2000}
K.~Künnemann.
\newblock Uniformization of {Shimura} curves by the $p$-adic upper half plane.
\newblock In J.-B. Bost, F.~Loeser, and M.~Raynaud, editors, {\em Courbes
  semi-stables et groupe fondamental en g\'eometrie alg\'ebrique}, pages
  121--128. Birkh\"auser Verlag, Basel, 2000.

\bibitem{MM2020}
Y.I. Manin and M.~Marcolli.
\newblock Quantum statistical mechanics of the absolute {Galois} group.
\newblock {\em Symmetry, Integrability and Geometry: Methods and Applications
  SIGMA}, 16(038):52 pages, 2020.

\bibitem{Marcolli2020}
M.~Marcolli.
\newblock Holographic codes on {Bruhat–Tits} buildings and {Drinfeld}
  symmetric spaces.
\newblock {\em Pure and Applied Mathematics Quarterly}, 16(1):1--33, 2020.

\bibitem{Mumford1972}
D.~Mumford.
\newblock An analytic construction of degenerating curves over complete local
  rings.
\newblock {\em Compos. Math.}, 24(2):129--174, 1972.

\bibitem{PRSWY2024}
T.~Pierce, R.~Rajkumar, A.~Stine, D.~Weisbart, and A.M. Yassine.
\newblock Brownian motion in a vector space over a local field is a scaling
  limit.
\newblock {\em Expositiones Mathematicae}, page 125607, 2024. In Press, Journal
  Pre-proof.

\bibitem{RW2023}
R.~Rajkumar and D.~Weisbart.
\newblock Components and exit times of {Brownian} motion in two or more
  $p$-adic dimensions.
\newblock {\em Journal of Fourier Analysis and Applications}, 29(75):1--28,
  2023.

\bibitem{RZ2008}
J.J. Rodríguez-Vega and W.A. Zúñiga-Galindo.
\newblock Taibleson operators, $p$-adic parabolic equations and ultrametric
  diffusion.
\newblock {\em Pacific Journal of Mathematics}, 237(2):327--347, 2008.

\bibitem{Schmechta2000}
T.~Schmechta.
\newblock {Mumford-Tate} curves.
\newblock In J.-B. Bost, F.~Loeser, and M.~Raynaud, editors, {\em Courbes
  semi-stables et groupe fondamental en g\'eometrie alg\'ebrique}, pages
  121--128. Birkh\"auser Verlag, Basel, 2000.

\bibitem{Serre1980}
J.-P. Serre.
\newblock {\em Trees}.
\newblock Springer, Berlin, 1980.

\bibitem{Taibleson1975}
M.H. Taibleson.
\newblock {\em Fourier Analysis on Local Fields}.
\newblock Princeton University Press, Princeton, N.J., University of Tokyo
  Press, Tokyo, 1975.

\bibitem{Varadarajan1997}
V.S. Varadarajan.
\newblock Path integrals for a class of $p$-adic {Schr\"odinger} equations.
\newblock {\em Letters in Mathematical Physics}, 39:97--106, 1997.

\bibitem{Varadarajan2002}
V.S. Varadarajan.
\newblock Some remarks on arithmetic physics.
\newblock {\em Journal of Statistical Planning and Inference}, 103(1--2):3--13,
  2002.

\bibitem{VVZ1994}
V.S. Vladimirov, I.V. Volovich, and E.I. Zelenov.
\newblock {\em $p$-adic Analysis and Mathematical Physics}.
\newblock Series on Soviet and East European Mathematics, 1. World Scientific
  Publishing Co., Inc., River Edge, NJ, 1994.

\bibitem{Weisbart2021}
D.~Weisbart.
\newblock On infinitesimal generators and {Feynman–Kac} integrals of adelic
  diffusion.
\newblock {\em Journal of Mathematical Physics}, 62(6), 2021.

\bibitem{Weisbart2024}
D.~Weisbart.
\newblock $p$-adic {Brownian} motion is a scaling limit.
\newblock {\em J. Phys. A: Math. Theor.}, 57:205203, 2024.

\bibitem{Yasuda2004}
K.~Yasuda.
\newblock Trace formula on the $p$-adic upper half-plane.
\newblock {\em Journal of Functional Analysis}, 216:422--454, 2004.

\bibitem{ZunigaNetworks}
W.A. {Z\'{u}\~{n}iga-Galindo}.
\newblock Reaction-diffusion equations on complex networks and {Turing}
  patterns, via $p$-adic analysis.
\newblock {\em Journal of Mathematical Analysis and Applications},
  491(1):124239, 2020.

\bibitem{Zuniga2015}
W.A. Zúñiga-Galindo.
\newblock The non-archimedean stochastic heat equation driven by {Gaussian}
  noise.
\newblock {\em J. Fourier Anal. Appl.}, 21:600--627, 2015.

\bibitem{Zuniga2022}
W.A. Zúñiga-Galindo.
\newblock Ultrametric diffusion, rugged energy landscapes and transition
  networks.
\newblock {\em Physica A: Statistical Mechanics and its Applications},
  597:127221, 2022.

\end{thebibliography}

\end{document}